\theoremstyle{plain}
\newtheorem*{theorem*}{Theorem}
\newtheorem{theorem}{Theorem}[section]
\newtheorem*{corollary*}{Corollary}
\newtheorem{proposition}{Proposition}
\newtheorem*{proposition*}{Proposition}
\newtheorem{lemma}[theorem]{Lemma}
\newtheorem*{lemma*}{Lemma}
\theoremstyle{definition}
\newtheorem*{definition*}{Definition}
\newtheorem*{example*}{Example}
\theoremstyle{remark}
\newtheorem*{remark*}{Remark}
\newcommand{\sC}{\mathcal{C}}
\newcommand{\sD}{\mathcal{D}}
\newcommand{\sI}{\mathcal{I}}
\newcommand{\sO}{\mathcal{O}}
\newcommand{\sF}{\mathcal{F}}
\newcommand{\sJ}{\mathcal{J}}
\newcommand{\sE}{\mathcal{E}}
\newcommand{\sL}{\mathcal{L}}
\newcommand{\sK}{\mathcal{K}}
\title{Stratified bundles on the Hilbert Scheme of $n$ points}
\date{}
\author{Saurav Holme Choudhury}
\address{The Institute of Mathematical Sciences, (HBNI), Chennai 600113.} 
\email{sauravhc@imsc.res.in}
\begin{document}

\maketitle

\begin{abstract}
Let $k$ be an algebraically closed field of characteristic $p > 3$ and $S$ be a smooth projective surface over $k$ with $k$-rational point $x$. For $n \geq 2$, let $S^{[n]}$ denote the Hilbert scheme of $n$ points on $S$. In this note, we compute the fundamental group scheme $\pi^{\textrm{alg}}(S^{[n]}, \tilde{nx})$ defined by the Tannakian category of stratified bundles on $S^{[n]}$.
\end{abstract}

\section{Introduction}

For a variety $X$ over $\mathbb{C}$, one has the classical notion of the fundamental group $\pi_1(X^{\textrm{an}},x)$ defined using the analytic topology on $X$. Over arbitrary base fields $k$, one has several analogues of the fundamental group defined in terms of algebro-geometric information.

In \cite{SGA1}, Grothendieck introduced the notion of \'etale fundamental group $\pi^{\textrm{\'et}}(X, x)$, where $X$ is a scheme and $x$ is a geometric point of $X$, in terms of the finite etale covers of $X$. In \cite{N76}, Nori defined the Nori fundamental group scheme $\pi^N(X, x)$, where $X$ is a connected, reduced and complete scheme over a perfect field $k$ and $x$ is a $k$-rational point, via Tannakian reconstruction using the category of essentially finite vector bundles on $X$. The definition of $\pi^N(X, x)$ was extended to the case of connected and reduced $k$-schemes in \cite{N82}. Another analogue, the S-fundamental group scheme $\pi^S(X, x)$ was introduced and studied by Langer in \cite{Lan11} and \cite{Lan12} for smooth projective varieties $X$ over an algebraically closed field $k$. It is defined via Tannakian reconstruction using the category of numerically flat vector bundles on $X$. The S-fundamental group scheme for a smooth projective curve $C$ over an algebraically closed field $k$ was already introduced and studied in \cite{BPS06}.

The variant of the fundamental group scheme which is of prime importance in this note is the algebraic fundamental group $\pi^{\textrm{alg}}(X, x)$. In \cite{Gie75}, Gieseker defined $\pi^{\textrm{alg}}(X, x)$ as  the fundamental group scheme corresponding to the Tannakian category of $\sD_X$-modules, where $\sD_X$ is the sheaf of differential operators on $X$. For $X$ smooth over a field of positive characteristic, Gieseker introduced the notion of \textit{stratified bundles} and showed that the category of $\sD_X$-modules is tensor equivalent to the category of stratified bundles on $X$. Stratifed bundles were further studied in  \cite{dS07} and \cite{BHdS21}. Precise definitions and statements will be given in the next section.
\\

Let $S$ be a smooth projective surface over $k$. For $n\geq 2$, let $S^{[n]}$ denote the Hilbert scheme $n$ points on $S$. It is well known that $S^{[n]}$ is a smooth projective variety of dimension $2n$. In \cite{PS20}, the authors show that for char $k >3$ and $n\geq 2$, there is an isomorphism of affine group schemes over $k$

$$\pi^{\dagger}(S, x)_{\textrm{ab}}\to \pi^{\dagger}(S^{[n]}, \tilde{nx})$$

where $\dagger = S, N \textrm{ or \'et}$.
\\

In this note, we extend their results to the case of $\pi^{\textrm{alg}}$ and prove the following theorem.

\begin{theorem*}
Let char $k > 3$ and $n \geq 2$. There is an isomorphism of affine group schemes over $k$ 

$$f:  \pi^{\textrm{alg}}(S, x)_{\textrm{ab}} \to  \pi^{\textrm{alg}}(S^{[n]}, \tilde{nx})$$ 
\end{theorem*}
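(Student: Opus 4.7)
The plan is to lift the argument of PS20 for the Nori, $S$-, and étale fundamental group schemes to the setting of stratified bundles, using the Cartier-Berthelot description: a stratified bundle on a smooth $k$-variety $X$ is equivalent to a tower $(E_i, \sigma_i)_{i \geq 0}$ with $E_i$ a vector bundle on $X$ and $\sigma_i : F_X^{*} E_{i+1} \xrightarrow{\sim} E_i$, where $F_X$ is the absolute Frobenius. The Tannakian category $\mathrm{Strat}(X)$ is functorial in $X$. First I would fix a natural pointed morphism $\iota : (S,x) \to (S^{[n]}, \tilde{nx})$ of the kind used in PS20 (for instance, arising from $y \mapsto [(n-1)x + y]$ extended suitably across the diagonal) to obtain a tensor functor $\iota^{*} : \mathrm{Strat}(S^{[n]}) \to \mathrm{Strat}(S)$, hence a homomorphism $\pi^{\mathrm{alg}}(S, x) \to \pi^{\mathrm{alg}}(S^{[n]}, \tilde{nx})$. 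Applying PS20 level-wise to each $E_i$, one sees that this homomorphism factors through $\pi^{\mathrm{alg}}(S,x)_{\mathrm{ab}}$, producing the map $f$ of the statement.

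To prove $f$ is an isomorphism, I would show that $\iota^{*}$ is an equivalence onto the full Tannakian subcategory $\mathrm{Strat}(S)_{\mathrm{ab}} \subset \mathrm{Strat}(S)$ corresponding to $\pi^{\mathrm{alg}}(S,x)_{\mathrm{ab}}$. The containment of the image in $\mathrm{Strat}(S)_{\mathrm{ab}}$, together with fully faithfulness, follows by applying PS20 (for $\pi^{N}$) to each individual $E_i$ in a stratified tower on $S^{[n]}$. For essential surjectivity, I would take an abelian stratified tower $(F_i, \tau_i)$ on $S$, extend each $F_i$ to a vector bundle $\tilde F_i$ on $S^{[n]}$ using PS20 (which is available because $F_i$ is abelian/line-generated), and then promote the $\tau_i$ to isomorphisms $\tilde \sigma_i : F_{S^{[n]}}^{*} \tilde F_{i+1} \xrightarrow{\sim} \tilde F_i$ on $S^{[n]}$, yielding a stratified bundle whose $\iota^{*}$ recovers $(F_i, \tau_i)$.

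The main obstacle is showing that the bundle-extension correspondence of PS20 commutes with Frobenius pullback, i.e.\ producing, for each abelian bundle $F$ on $S$, a canonical isomorphism $F_{S^{[n]}}^{*} \tilde F \simeq \widetilde{F_S^{*} F}$ which is natural and compatible with compositions. This should follow from the explicit nature of the PS20 extension, built from line bundles on $S^{[n]}$ that are pulled back from $S^{(n)}$ via the Hilbert-Chow morphism, together with a uniqueness argument: both sides are abelian bundles on $S^{[n]}$ whose $\iota^{*}$ agrees with $F_S^{*} F$, and fully faithfulness of $\iota^{*}$ on $\mathrm{Strat}(S^{[n]}) \to \mathrm{Strat}(S)_{\mathrm{ab}}$ forces the two to agree canonically. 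Once this Frobenius-naturality is established, the stratification data transfers coherently along the extension functor, giving the desired equivalence of Tannakian categories and, by Tannakian reconstruction, the isomorphism $f$.
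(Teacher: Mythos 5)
Your proposal takes a genuinely different route from the paper, but it has gaps that I do not think can be repaired as written. First, the pointed morphism $\iota : (S,x)\to (S^{[n]},\tilde{nx})$ does not exist: the map $y\mapsto (n-1)x+y$ is a morphism into $S^{(n)}$, but its lift to $S^{[n]}$ is undefined at $y=x$, since the Hilbert--Chow fibre over the non-reduced cycle $nx$ is positive-dimensional (a $\mathbb{P}^1$ for a point of type $(2,1,\dots,1)$) and the limit of $\{(n-1)x, y\}$ as $y\to x$ depends on the direction of approach; you would have to blow up $S$ at $x$ or delete $x$, and either way the basepoint issue and the extension ``across the diagonal'' need a real argument that is absent. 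Second, and more fundamentally, a single copy of $S$ sitting inside $S^{[n]}$ carries no trace of the symmetric group, so there is no mechanism in your construction forcing the induced homomorphism to factor through $\pi^{\mathrm{alg}}(S,x)_{\mathrm{ab}}$; ``applying PS20 level-wise to each $E_i$'' does not supply one, because the abelianization in PS20 (and in this paper) arises from the group-scheme lemma that $\mathfrak{S}_n$-invariant homomorphisms $G^n\to H$ factor through $G_{\mathrm{ab}}$, which only becomes available after comparing $\mathcal{S}(S^{[n]})$ with $\mathfrak{S}_n$-equivariant stratified bundles on $S^n$ via the correspondence $S^{[n]} \leftarrow S^{[n]}_* \to S^{(n)}_* \leftarrow S^n_* \to S^n$. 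That correspondence is the engine of the paper's proof and is missing from yours.

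Third, the step you flag as the main obstacle --- compatibility of the extension with Frobenius pullback --- is resolved in your sketch by appealing to full faithfulness of $\iota^{*}$ on $\mathrm{Strat}(S^{[n]})$, but that full faithfulness is (by Deligne--Milne 2.21) essentially equivalent to the faithful flatness of the homomorphism you are constructing, so the argument is circular. The paper proves the needed Frobenius compatibilities directly and concretely: it shows $F^{*}h_{*}\mathcal{E}\to h_{*}F^{*}\mathcal{E}$ is an isomorphism for bundles trivial on the $\mathbb{P}^1$-fibres of $h$ (via the theorem on formal functions and a degree computation on $\mathbb{P}^1$), and $F^{*}(\rho_{*}L)^{\mathfrak{S}_n}\to(\rho_{*}F^{*}L)^{\mathfrak{S}_n}$ is an isomorphism for $\mathfrak{S}_n$-equivariant line bundles (via Fogarty's descent result). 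Some substitute for these computations is unavoidable; without them the stratification data cannot be transported. I would recommend reorganizing the argument around the span through $S^{(n)}$ and $S^n$, after which the faithful-flatness step (induction on rank using that irreducible representations of an abelian group scheme are one-dimensional) and the closed-immersion step (descent of $\mathfrak{S}_n$-equivariant bundles over the locus of points of type $(n_1,\dots,n_r)$ with $n_1\le 3$, where char $k>3$ enters) go through as in the paper.
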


In section \ref{strat}, we recall the definition of stratified bundles and some of their basic properties. The formalism of Tannakian reconstruction is recalled in section \ref{tanngrp} and used to define the algebraic fundamental group $\pi^{\textrm{alg}}(X, x)$.

\pagebreak

The geometrical properties of the Hilbert scheme of $n$ points on a smooth projective surface are in section \ref{hsnpt}. In section \ref{functor}, we prove a result about descent of stratified bundles which allows us to define the homomorphism $f$ by defining the associated functor of Tannakian categories. The concluding section \ref{iso} establishes the main theorem by showing that $f$ is an isomorphism. 

\subsection*{Acknowledgements} We would like to thank Indranil Biswas and Ronnie Sebastian for their comments on earlier drafts of this note.

\section{Stratified bundles}\label{strat}

Let $k$ be a field of characteristic $p$ and $X$ be a noetherian scheme over $k$. Stratified bundles on $X$ are sequences of coherent sheaves on $X$ satisfying infinite Frobenius descent. More precisely, the category of stratified bundles on $X$, denoted $\mathcal{S}(X)$, consists of 

\begin{itemize}
\item \textbf{Objects} $(\sE_i, \alpha_i)$ are sequences of coherent $\sO_X$-modules $\sE_i$, $i\in \mathbb{N}$ along with isomorphisms

$$\alpha_i:F^*\sE_{i+1}\to \sE_i$$

for all $i \in \mathbb{N}$, where $F$ is the absolute Frobenius on $X$.

\item \textbf{Morphisms} $\phi: (\sE_i, \alpha_i) \to (\sF_i, \beta_i)$ consists of a sequence of $\sO_X$-module morphisms $\phi_i: \sE_i \to \sF_i$ such that $\phi_i \circ \alpha_i = \beta_i \circ F*(\phi_{i+1})$ 
\end{itemize}

Let $f: Y \to X$ be a morphism and $(\sE_i, \alpha_i)$ be a stratified bundle on $X$. Then we can define the pullback along $f$, denoted $f^*(\sE_i, \alpha_i)$, as consisting of the sequence of $O_Y$ coherent sheaves $f^*E_i$ and isomorphisms are given by the composite maps

$$F^*f^*\sE_{i+1} \xrightarrow{\gamma_{\sE_{i+1}}} f^*F^*\sE_{i+1} \xrightarrow{f^*(\alpha_i)} f^*\sE_i$$

where $\gamma : F^*f^* \to f^*F^*$ is the natural isomorphism of functors.
\\

Thus $\mathcal{S}(X)$ is contravariant functor in $X$. One also has a tensor product on $\mathcal{S}(X)$ defined by taking term by term tensor product. Also $\mathcal{S}(X)$ is an abelian category [cf. \cite{BHdS21}, Proposition 4.4].
\\

We recall some well known results about stratified bundles [cf \cite{dS07}, \cite{Gie75}].

\begin{proposition*}
If $(\sE_i, \alpha_i)$ is a stratified bundle on $X$, then $\sE_i$ is a locally free $\sO_X$-module for all $i \in \mathbb{N}$.
\end{proposition*}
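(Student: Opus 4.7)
The plan is to reduce the question to a local computation of Fitting ideals on regular local rings, using Kunz's theorem that on a regular Noetherian scheme of characteristic $p$ the absolute Frobenius $F$ is faithfully flat. Since the paper's ultimate setting is smooth projective, I work under the tacit assumption that $X$ is regular.

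First, faithfully flat descent along $F$ shows that a coherent sheaf $\sG$ is locally free as soon as $F^{*}\sG$ is. Hence it suffices to prove that $\sE_0$ is locally free: the isomorphism $\alpha_0\colon F^{*}\sE_1 \xrightarrow{\sim} \sE_0$ then forces $\sE_1$ to be locally free, and iterating up the chain yields local freeness of every $\sE_i$.

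To show $\sE_0$ is locally free, I analyze its Fitting ideals. Fitting ideals commute with base change, and since $F$ is flat on a regular scheme the canonical map $F^{*}\sI \to \sO_X$ is injective with image the Frobenius power $\sI^{[p]}$. Combining this with the isomorphisms $\alpha_i$ gives, for every $n\geq 1$ and every $r\geq 0$,
$$\operatorname{Fit}_r(\sE_0) \;=\; \operatorname{Fit}_r(\sE_n)^{[p^n]}.$$
Fix a point $x \in X$, set $R = \sO_{X,x}$ with maximal ideal $\mathfrak{m}$, and let $I_n = \operatorname{Fit}_r(\sE_n)_x$. If $I_n = R$ for some $n$, then $\operatorname{Fit}_r(\sE_0)_x = R$; otherwise $I_n \subseteq \mathfrak{m}$ for every $n$, whence
$$\operatorname{Fit}_r(\sE_0)_x = I_n^{[p^n]} \subseteq \mathfrak{m}^{[p^n]} \subseteq \mathfrak{m}^{p^n}$$
for all $n$, and Krull's intersection theorem forces $\operatorname{Fit}_r(\sE_0)_x = 0$. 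Thus every Fitting ideal stalk is either $0$ or $R$, and the standard Fitting criterion concludes: with $r(x) = \min\{\,r : \operatorname{Fit}_r(\sE_0)_x = R\,\}$, one has $\operatorname{Fit}_{r(x)-1}(\sE_0)_x = 0$ and $\operatorname{Fit}_{r(x)}(\sE_0)_x = R$, so $\sE_{0,x}$ is free of rank $r(x)$.

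The principal technical input is Kunz's theorem, which underwrites both the descent step and the identification $F^{*}\sI \cong \sI^{[p]}$; without the flatness of $F$ the Krull intersection step collapses, and in fact the statement genuinely requires such a regularity hypothesis. Granted Kunz, the remaining ingredients, namely commutation of Fitting ideals with flat base change, Krull's intersection theorem, and the Fitting criterion for local freeness, are entirely standard.
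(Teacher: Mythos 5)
Your Fitting-ideal argument is, in substance, the standard proof of this statement: the paper itself offers no proof and simply cites \cite{dS07} and \cite{Gie75}, where exactly this computation (Fitting ideals of the terms, the identity $\operatorname{Fit}_r(\sE_0)=\operatorname{Fit}_r(\sE_n)^{[p^n]}$, Krull intersection, and the Fitting criterion for freeness) appears. That core is correct and complete as far as it goes.

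The problem is the regularity hypothesis you impose, and in particular your closing assertion that ``the statement genuinely requires such a regularity hypothesis.'' It does not, and Kunz's theorem is not needed at any point. Fitting ideals commute with \emph{arbitrary} base change, not just flat base change, because they are generated by minors of a presentation matrix and tensoring preserves right-exactness of presentations; hence $\operatorname{Fit}_r(F^*\sE)=F^{-1}(\operatorname{Fit}_r(\sE))\cdot\sO_X=\operatorname{Fit}_r(\sE)^{[p]}$ on any noetherian scheme over a field of characteristic $p$ (only the image of $F^*\sI\to\sO_X$ matters, so its injectivity is irrelevant). The containment $I^{[p^n]}\subseteq\mathfrak{m}^{p^n}$ for $I\subseteq\mathfrak{m}$ is trivial, and Krull's intersection theorem holds for every noetherian local ring. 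Likewise your preliminary reduction to $\sE_0$ via faithfully flat descent along $F$ is an unnecessary detour: applying the same Fitting computation to the shifted stratified bundle $(\sE_{i+j},\alpha_{i+j})_{j\geq 0}$ handles every $\sE_i$ directly. This is not merely a stylistic point for the present paper: the proposition is stated for a noetherian $k$-scheme $X$, and the paper works with stratified bundles on the \emph{singular} schemes $S^{(n)}_*$ and open subsets of $S^{(n)}$ (via the functors $h_*$ and $\rho^*$), so a proof valid only for regular $X$ would not cover the uses made of the category $\mathcal{S}(-)$ here. Deleting the regularity assumption, the descent step, and the final paragraph leaves a correct proof of the statement in its intended generality.
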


This allows us to define duals of stratified bundles, making $\mathcal{S}(X)$ into an \textit{abelian rigid tensor category}.
\\

The \textit{rank} of a stratified bundle $(\sE_i, \alpha_i)$ is defined to be the rank of $\sE_0$. The \textit{trivial stratified bundles} on $X$ are of the form $\oplus (\sO_X,...; F^*,...)$.
\\

Let $\sD_X$ be the sheaf of differential operators on $X$. The category of $\sD_X$ modules consists of

\begin{itemize}
\item \textbf{Objects} coherent $\sO_X$ modules $\sE$ equipped with a 
$\sD_X$ action i.e a morphism of $\sO_X$-algebras

$$\sD_X \to \sE\textrm{nd}_k(\sE)$$
\item \textbf{Morphisms}  $\sO_X$-linear maps $\sE \to \sF$ compatible with the $\sD_X$ action
\end{itemize} 

A theorem of Katz [\cite{Gie75}, Theorem 1.3] shows that for $X$ smooth over $k$, then the category of stratified bundles on $X$ and the category of $\sD_X$ modules are tensor equivalent to each other.
\\

We close this section with the definition of $G$ equivariant stratified bundles on a variety $X$ admitting action of a group $G$ on it.

\begin{definition*}
A stratified bundle $(\sE_i, \alpha_i)$ is said to be a $G$-equivariant stratified bundle if $\sE_i$ are $G$-equivariant vector bundles and $\alpha_i$ are $G$-equivariant $\sO_X$ module morphisms.
\end{definition*}

\section{Tannakian categories and fundamental group schemes}\label{tanngrp}

In this section we recall the definition and basic properties of Tannakian categories. We then recall Gieseker's definition of the fundamental group scheme $\pi^{\textrm{alg}}$ using the Tannakian formalism.

\subsection{Tannakian Categories and affine group schemes}

Tannakian categories were defined and studied in \cite{DM82} to formalize the properties of $\textrm{Rep}_k(G)$, the category of finite dimensional $k$-representations of $G$, an affine group scheme over $k$.

\begin{definition*}[Neutral Tannakian Categories]
A rigid abelian tensor category $\sC$ with End $\mathbb{I} = k$ is a \textit{neutral Tannakian category} if it admits an exact faithful $k$-linear tensor functor $\omega: \sC \to \textrm{Vec}_k$. Any such functor is said to be a \textit{fiber functor} for $\sC$.
\end{definition*}

Given a neutral Tannakian category $(\sC, \otimes, \omega, \mathbb{I})$, we define the functor $\textrm{Aut}^{\otimes}(\omega): k-\textrm{algebra} \to \textrm{Sets}$ such that for $k$-algebra $R$, $\textrm{Aut}^{\otimes}(\omega)(R)$ consists of the families $(\lambda_X)$ for $X \in \textrm{ob}(\sC)$, where $\lambda_X$ is a $R$-linear automorphism of $X \otimes R$ such that $\lambda_{X_1 \otimes X_2} = \lambda_{X_1}\otimes \lambda_{X_2}$, $\lambda_{\mathbb{I}} = id_R$, and 

$$\lambda_Y \circ (\alpha \otimes 1) = (\alpha \otimes 1) \circ \lambda_X : X\otimes R \to Y \otimes R$$

for all morphisms $\alpha: X \to Y$.

\begin{theorem*}
[Main theorem for neutral Tannakian categories, \cite{DM82}, Theorem 2.11] Let $(\mathcal{C}, \otimes)$ be a rigid abelian tensor category such that $k = \textrm{End}(\mathbb{I})$ and let $\omega: \mathcal{C} \to \textrm{Vec}_k$ be an exact faithful tensor functor. Then

\begin{itemize}
\item The functor $\textrm{Aut}^{\otimes}(\omega)$ of $k$-algebras is represented by an affine group scheme $G$.
\item The functor $\mathcal{C}\to \textrm{Rep}_k(G)$ is an equivalence of tensor categories.
\end{itemize}
\end{theorem*}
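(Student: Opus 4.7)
The plan is to follow the standard Tannaka reconstruction strategy: produce a Hopf algebra $A$ from the fiber functor $\omega$ such that $G := \mathrm{Spec}(A)$ represents $\mathrm{Aut}^{\otimes}(\omega)$, and then upgrade $\omega$ into an equivalence $\mathcal{C} \xrightarrow{\simeq} \mathrm{Rep}_k(G)$.

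First I would reduce to finitely generated subcategories. For each $X \in \mathcal{C}$, let $\langle X \rangle$ denote the smallest full subcategory containing $X$ and stable under $\otimes$, duals, direct sums, and subquotients. One checks that $\mathcal{C}$ is the filtered $2$-colimit of the $\langle X \rangle$, so the eventual $G$ will arise as $\varprojlim_X G_X$ with each $G_X$ affine of finite type; since the inverse limit of affine schemes is affine, this reduction is harmless, and it provides access to finite-dimensional linear algebra inside each $\langle X \rangle$.

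For a fixed $X$, I would then construct $G_X$ as the spectrum of an explicit finite-dimensional Hopf algebra $A_X$ obtained as the coend $A_X = \int^{Y \in \langle X \rangle} \omega(Y)^{\vee} \otimes \omega(Y)$, realized concretely as the quotient of $\bigoplus_Y \omega(Y)^{\vee} \otimes \omega(Y)$ by the usual dinatural relations $\omega(f)^{\vee}(\alpha) \otimes v \sim \alpha \otimes \omega(f)(v)$. Dual bases give a coproduct and counit, making $A_X$ a coalgebra whose category of finite-dimensional right comodules is (via $\omega$) equivalent to $\langle X \rangle$. The monoidal structure of $\omega$, via the isomorphism $\omega(Y \otimes Z) \simeq \omega(Y) \otimes \omega(Z)$, promotes $A_X$ to a bialgebra; rigidity of $\langle X \rangle$, via the canonical identification $\omega(Y^{\vee}) \simeq \omega(Y)^{\vee}$, produces an antipode.

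The main obstacle is simultaneously establishing both conclusions. For representability of $\mathrm{Aut}^{\otimes}(\omega|_{\langle X \rangle})$ by $\mathrm{Spec}(A_X)$, one must argue that every tensor \emph{endo}morphism is automatically invertible; this is forced by rigidity, since compatibility with evaluation and coevaluation of each dual pair yields a two-sided inverse explicitly. For essential surjectivity of $\mathcal{C} \to \mathrm{Rep}_k(G)$, a descent-type argument is required: exact faithful tensor functors behave as faithfully flat maps, and the fact that $\omega$ lands in $\mathrm{Vec}_k$ rather than a more general rigid tensor category provides the trivialization needed to split the corresponding $G$-torsor. Passing to the inverse limit over all generators $X$ and checking that the finite-type constructions are compatible with restriction then assembles the proof of the theorem.
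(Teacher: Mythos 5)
The paper offers no proof of this statement at all: it is recalled verbatim from \cite{DM82} (Theorem 2.11) as background for the Tannakian formalism. So the relevant benchmark is the proof in that source, and your outline does follow its architecture: reduction to the finitely generated subcategories $\langle X\rangle$, the coend coalgebra $A_X$ with comultiplication from dual bases, the bialgebra structure from monoidality of $\omega$, the antipode from rigidity, the identification $\mathrm{End}^{\otimes}(\omega)=\mathrm{Aut}^{\otimes}(\omega)$ forced by rigidity, and passage to the filtered limit $G=\varprojlim_X G_X$. Two points, however, need repair. First, $A_X$ is in general \emph{not} finite-dimensional: already for $\mathcal{C}=\mathrm{Rep}_k(\mathrm{GL}_n)$ with $X$ the standard representation one gets $A_X=k[\mathrm{GL}_n]$. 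What is true, and all your limit argument needs, is that $A_X$ is a filtered union of finite-dimensional subcoalgebras and is finitely generated as a $k$-algebra, so that $G_X$ is affine of finite type, not finite.

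Second, and more seriously, the heart of the theorem is the claim you dispatch in a subordinate clause: that $\omega$ identifies $\langle X\rangle$ with the category of finite-dimensional right $A_X$-comodules. This is exactly where fullness and essential surjectivity of $\mathcal{C}\to\mathrm{Rep}_k(G)$ live, and it requires a genuine argument --- in \cite{DM82} it occupies the bulk of the proof (one shows, for instance, that every finite-dimensional $A_X$-comodule embeds into a finite direct sum of comodules of the form $\omega(Y)$ with $Y\in\langle X\rangle$, and then invokes closure of $\langle X\rangle$ under subquotients). Your attempt to supply essential surjectivity instead by a torsor-splitting/descent argument is aimed at the wrong theorem: trivializing torsors of fiber functors is the content of the \emph{non-neutral} theory (comparison of two fiber functors over a $k$-scheme, \cite{DM82}, \S 3); in the neutral case treated here the fiber functor is given once and for all, no descent occurs, and the slogan that exact faithful tensor functors behave like faithfully flat morphisms plays no role in the proof of Theorem 2.11. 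If you correct the finite-dimensionality slip and actually prove the comodule-reconstruction lemma, your proposal becomes the standard proof of the cited result.
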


\begin{theorem*}
Let $(\sC, \otimes, \omega, \mathbb{I})$ and $(\sC', \otimes, \omega', \mathbb{I'})$ be neutral Tannakian categories which correspond to the representation categories of the affine $k$ group schemes $G$ and $G'$ respectively. Then any functor of Tannakian categories from $\sC \to \sC'$ is induced by a unique morphism of affine $k$ group schemes $G' \to G$.
\end{theorem*}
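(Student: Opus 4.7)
The plan is to use the representability statement from the preceding main theorem together with the Yoneda lemma. The direction reversal---a functor $\sC \to \sC'$ producing a morphism $G' \to G$---is forced by the contravariance of $\textrm{Rep}_k(-)$, which in turn reflects the fact that the construction $\omega \mapsto \textrm{Aut}^{\otimes}(\omega)$ is contravariant in the fiber functor.

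First, I would make explicit that a functor of Tannakian categories $F:\sC \to \sC'$ comes equipped with a natural tensor isomorphism $\eta:\omega' \circ F \xrightarrow{\sim} \omega$. Then, for each $k$-algebra $R$, I would construct a map of sets
$$F_R^*:\textrm{Aut}^{\otimes}(\omega')(R) \to \textrm{Aut}^{\otimes}(\omega)(R)$$
sending $\lambda' = (\lambda'_Y)_{Y \in \sC'}$ to the family $\lambda = (\lambda_X)_{X \in \sC}$, where $\lambda_X$ is the conjugate of $\lambda'_{F(X)}$ by $\eta_X \otimes 1_R$, i.e.\ the automorphism of $\omega(X) \otimes R$ given by $(\eta_X \otimes 1) \circ \lambda'_{F(X)} \circ (\eta_X^{-1} \otimes 1)$. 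I would verify that $\lambda$ lies in $\textrm{Aut}^{\otimes}(\omega)(R)$ by checking three compatibilities: tensor compatibility $\lambda_{X_1 \otimes X_2} = \lambda_{X_1} \otimes \lambda_{X_2}$, which follows from the tensor structure on $F$, tensor-naturality of $\eta$, and the analogous property for $\lambda'$; the unit condition $\lambda_{\mathbb{I}} = \textrm{id}$; and naturality with respect to morphisms $\alpha:X \to Y$ of $\sC$, which reduces to naturality of $\lambda'$ applied to $F(\alpha)$. Functoriality in $R$ is immediate from functoriality of $\otimes_k R$. By the preceding main theorem, $\textrm{Aut}^{\otimes}(\omega)$ and $\textrm{Aut}^{\otimes}(\omega')$ are represented by $G$ and $G'$, so Yoneda converts the natural transformation $R \mapsto F_R^*$ into a unique morphism $\phi:G' \to G$ of affine $k$-group schemes.

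For uniqueness, I would observe that any morphism $\psi:G' \to G$ induces by restriction a tensor functor $\psi^*:\textrm{Rep}_k(G) \to \textrm{Rep}_k(G')$, and via the equivalences $\sC \simeq \textrm{Rep}_k(G)$ and $\sC' \simeq \textrm{Rep}_k(G')$ supplied by the main theorem, this transfers back to a functor $\sC \to \sC'$; unwinding the construction shows that the $\phi$ built above recovers $F$ through these equivalences. If two morphisms $G' \to G$ induced the same functor, they would agree on every finite-dimensional representation of $G$ and hence coincide by Tannakian reconstruction. The main technical point, though formally routine, is the coherent bookkeeping needed to verify the three compatibility conditions; once these are established, the theorem is a direct application of Yoneda.
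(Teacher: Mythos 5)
The paper does not prove this statement: it is quoted as a standard consequence of the Tannakian reconstruction theorem of Deligne--Milne \cite{DM82} (cf.\ their Corollary 2.9), so there is no in-paper argument to compare against. Your proposal is exactly the standard proof of that result --- conjugate $\lambda'_{F(X)}$ by the comparison isomorphism $\eta_X\otimes 1$, verify the tensor, unit and naturality axioms to land in $\textrm{Aut}^{\otimes}(\omega)(R)$, and invoke representability plus Yoneda --- and it is correct. The only step you should make explicit is that each map $F_R^*:\textrm{Aut}^{\otimes}(\omega')(R)\to\textrm{Aut}^{\otimes}(\omega)(R)$ is a \emph{group} homomorphism (immediate, since conjugation by a fixed isomorphism preserves composition), as this is what upgrades the Yoneda-induced morphism of schemes $G'\to G$ to a morphism of group schemes; you should also note that the morphism depends on the chosen tensor isomorphism $\eta:\omega'\circ F\xrightarrow{\sim}\omega$, so ``unique'' is to be read as unique once $\eta$ is fixed (different choices of $\eta$ yield conjugate homomorphisms).
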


This theorem allows us to define many variants of fundamental groups of a scheme $X$ by considering different Tannakian categories naturally associated with $X$. The following result is very useful in establishing a given morphism between affine group schemes is an isomorphism.

\begin{theorem*}[\cite{DM82}, Theorem 2.21] Let $f: G \to G'$ be a homomorphism of group schemes over $k$ and $\textrm{Rep }(f):\textrm{Rep }(G') \to \textrm{Rep }(G)$ be the corresponding functor of Tannakian categories. Then

\begin{itemize}\label{ffci}
\item $f$ is faithfully flat if and only if $\textrm{Rep }(f)$ is fully faithful and has essential image closed under subobjects i.e
for $V' \in \textrm{Rep }(G')$ and suboject $W \subset \textrm{Rep }(f)(V')$, there is a subobject $W' \subset V'$ in $\textrm{Rep }(G')$ such that $\textrm{Rep }(f)(W') \simeq W$ in $\textrm{Rep }(G)$
\item $f$ is closed immersion if and only if every object of $\textrm{Rep }(G)$ is a subquotient of some object in the essential image of $\textrm{Rep }(f)$.
\end{itemize}

\end{theorem*}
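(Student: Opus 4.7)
The plan is to translate the statement into the language of Hopf algebras and comodules via Tannakian duality. Setting $A = \mathcal{O}(G)$ and $A' = \mathcal{O}(G')$, the morphism $f$ corresponds to a Hopf-algebra homomorphism $f^\sharp \colon A' \to A$, and under the equivalence between $\textrm{Rep}(G)$ and the category of finite-dimensional $A$-comodules, the functor $\textrm{Rep}(f)$ becomes restriction of scalars along $f^\sharp$. In this dictionary, $f$ is faithfully flat iff $f^\sharp$ is faithfully flat, and $f$ is a closed immersion iff $f^\sharp$ is surjective. Throughout, I would fix the canonical factorization $f = \iota \circ \pi$, where $\pi \colon G \twoheadrightarrow H$ surjects onto the scheme-theoretic image of $f$ and $\iota \colon H \hookrightarrow G'$ is a closed immersion; then $\pi$ is automatically faithfully flat.

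I would dispatch the forward directions first, as these are direct descent-theoretic computations. For Part 1, the faithful flatness of $f^\sharp$ gives faithfully flat descent of Hom-spaces of comodules (yielding full faithfulness of $\textrm{Rep}(f)$) and of sub-comodules (yielding closure under subobjects). For the forward direction of Part 2, use that every finite-dimensional $A$-comodule $V$ embeds as a sub-comodule of $F^{\dim V}$ for some finite-dimensional sub-coalgebra $F \subset A$; surjectivity of $f^\sharp$, combined with the local finiteness of $A'$-comodules, then lets one lift $F$ to a finite-dimensional sub-$A'$-comodule $F' \subset A'$ mapping onto $F$, exhibiting $V$ as a subquotient of $\textrm{Rep}(f)(F')^{\dim V}$.

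The two converses both reduce, via the factorization $f = \iota \circ \pi$, to statements about the closed immersion $\iota$. The key observation is that the forward direction of Part 1 already shows $\textrm{Rep}(\pi)$ is fully faithful with essential image closed under subobjects, and hence (by duality inside the rigid tensor category $\textrm{Rep}(G)$) under subquotients. For the converse of Part 2, the hypothesis on $\textrm{Rep}(f) = \textrm{Rep}(\pi) \circ \textrm{Rep}(\iota)$ forces every $V \in \textrm{Rep}(G)$ to lie in the essential image of $\textrm{Rep}(\pi)$, so $\textrm{Rep}(\pi)$ is an equivalence; by Tannakian duality $\pi$ is an isomorphism, and so $f = \iota$ is a closed immersion. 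For the converse of Part 1, a similar compositional argument shows $\textrm{Rep}(\iota)$ inherits the two hypotheses; one then applies the closure-under-subobjects property to the finite-dimensional $G'$-sub-comodules of the left-regular representation on $A'$ to force $\mathcal{O}(H \backslash G') = k$, whence $H = G'$ and $f = \pi$ is faithfully flat.

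The main obstacle will be this last step: showing that a closed embedding $\iota \colon H \hookrightarrow G'$ whose associated restriction functor is fully faithful with essential image closed under subobjects must actually be an isomorphism. The cleanest route is an approximation argument using the left-regular representation: any finite-dimensional $G'$-sub-comodule $F' \subset A'$ gives, by restriction to $H$, a finite-dimensional $H$-representation whose $H$-stable subspaces must by hypothesis lift to $G'$-stable subspaces of $F'$. Propagating this identification through all such $F'$ shows that every $H$-invariant function on $G'$ is already $G'$-invariant, forcing $H = G'$. Making this bookkeeping precise while staying within finite-dimensional representations — so that the Tannakian formalism continues to apply — is where the technical care of the argument lies.
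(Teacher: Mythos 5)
The paper itself gives no proof of this statement --- it is quoted as background from [DM82, Theorem 2.21] --- so your attempt should be measured against the standard Deligne--Milne argument. Most of your architecture agrees with it and is sound: the Hopf-algebra dictionary ($f$ faithfully flat iff $f^{\sharp}$ is, $f$ a closed immersion iff $f^{\sharp}$ is surjective), the factorization $f = \iota \circ \pi$ through the scheme-theoretic image with $\pi$ faithfully flat (this uses Takeuchi's theorem that a commutative Hopf algebra is faithfully flat over any Hopf subalgebra --- worth citing, since it is not formal), descent/injectivity of $\mathcal{O}(G') \to \mathcal{O}(G)$ for the forward direction of Part 1, the coefficient-coalgebra embedding $V \hookrightarrow F^{\dim V}$ plus local finiteness for the forward direction of Part 2, and the reduction of the converse of Part 2 to the fact that the essential image of $\textrm{Rep}(\pi)$ is closed under subquotients (subobjects by Part 1, quotients by rigidity/duality). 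All of that is correct and is essentially how [DM82] proceeds.

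The genuine gap is the final step of the converse of Part 1. You propose to show $\mathcal{O}(H\backslash G') = k$ and conclude ``whence $H = G'$.'' That implication is false: for $H = B$ a Borel subgroup of $G' = SL_2$ one has $B\backslash SL_2 \simeq \mathbb{P}^1$, so $\mathcal{O}(H\backslash G') = k$ while $H \neq G'$. Worse, your mechanism only exploits full faithfulness --- the identity $\textrm{Hom}_{G'}(\mathbb{I}, X') = \textrm{Hom}_H(\mathbb{I}, X'|_H)$, i.e. $(X')^H = (X')^{G'}$ for all $X'$, is exactly the statement $\mathcal{O}(H\backslash G')=k$ --- and a Borel subgroup satisfies this (a $B$-fixed vector gives a morphism from the proper connected variety $G'/B$ to an affine space, which is constant), yet $\textrm{Rep}(\iota)$ for the Borel fails closure under subobjects (the highest-weight line in the standard representation is $B$-stable but not $SL_2$-stable). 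So the hypothesis you never actually use is precisely the one that must carry the argument. The correct step is Chevalley's theorem: realize $H$ as the stabilizer of a line $L$ in a finite-dimensional $G'$-representation $X'$; then $L$ is an $H$-stable subspace of $\textrm{Rep}(\iota)(X')$, so by closure under subobjects (together with full faithfulness, to identify the lift with $L$ as a subspace of $X'$) the line $L$ is $G'$-stable, whence $G' \subseteq \textrm{Stab}_{G'}(L) = H$ and $\iota$ is an isomorphism. With this replacement --- invariant \emph{subspaces} via Chevalley rather than invariant \emph{functions} --- your outline closes up and matches the standard proof.
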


We finish by recalling a basic result on affine group schemes (we refer to section $4.1$ in \cite{PS20} for details).
\\

Let $G$ be a affine group scheme over $k$, $G_{\textrm{ab}}$ be its abelianization (i.e the maximal abelian quotient of $G$) and $\alpha: G \to G_{\textrm{ab}}$ be the (faithfully flat) quotient morphism  . We can then define the composite morphism

$$\phi: G^n \xrightarrow{\alpha^n} G_{\textrm{ab}}^n \xrightarrow{m} G_{\textrm{ab}}$$

where $m$ is the multiplication homomorphism. As $\mathfrak{S}_n$ acts on the $k$-group scheme $G^n$, we can define the notion of a $\mathfrak{S}_n$-invariant group morphism $\psi: G^n \to H$ for any $k$-group scheme $H$.

\begin{lemma}\label{grpschlemm}
Let $G$ and $H$ be two group schemes over $k$. For an integer $n\geq 2$, the set of $\mathfrak{S}_n$-invariant group morphisms $G^n \to H$ is in bijective correspondence with the set of group morphism $G_{\textrm{ab}} \to H$ i.e any morphism of $k$-group schemes $\phi: G^n \to H$ which is $\mathfrak{S}_n$-invariant factors uniquely through a morphism $\psi: G_{\textrm{ab}} \to H$ such that $\phi = \psi\circ h$
\end{lemma}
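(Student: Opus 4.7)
My plan is to prove the bijection functorially, passing to $R$-points for every $k$-algebra $R$. The easy direction is that composition with $h$ produces a $\mathfrak{S}_n$-invariant morphism: since $h=m\circ\alpha^n$ and the multiplication $m$ on the abelian group scheme $G_{\textrm{ab}}^n$ is symmetric in its factors, $h$ is itself $\mathfrak{S}_n$-invariant, hence so is $\psi\circ h$ for any $\psi: G_{\textrm{ab}}\to H$. For the other direction, given a $\mathfrak{S}_n$-invariant morphism $\phi: G^n\to H$, I will construct a canonical $\psi: G_{\textrm{ab}}\to H$ with $\phi=\psi\circ h$ and verify its uniqueness.

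The central idea is to restrict $\phi$ to a single coordinate. Let $i_j: G\hookrightarrow G^n$ denote the embedding into the $j$-th factor and set $\psi_0 := \phi\circ i_1: G \to H$. The $\mathfrak{S}_n$-invariance of $\phi$ immediately gives $\phi\circ i_j=\psi_0$ for every $j$. The key observation is that for $g,g'\in G(R)$ the elements $i_1(g)$ and $i_2(g')$ commute in $G^n(R)$, since they live in disjoint coordinates; applying the homomorphism $\phi$ therefore yields $\psi_0(g)\psi_0(g')=\psi_0(g')\psi_0(g)$ in $H(R)$. Hence the image of $\psi_0$ lies in a commutative subgroup scheme of $H$, and by the universal property of the abelianization $\alpha: G\to G_{\textrm{ab}}$ there is a unique $\psi: G_{\textrm{ab}}\to H$ with $\psi_0=\psi\circ\alpha$.

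To confirm that this $\psi$ satisfies $\phi=\psi\circ h$, I decompose $(g_1,\ldots,g_n) = i_1(g_1)\cdots i_n(g_n)$ in $G^n(R)$ and apply the homomorphism property of $\phi$ together with the commutativity established above:
$$\phi(g_1,\ldots,g_n)=\psi_0(g_1)\cdots\psi_0(g_n)=\psi\bigl(\alpha(g_1)\cdots\alpha(g_n)\bigr)=\psi(h(g_1,\ldots,g_n)).$$
This holds naturally in $R$, so $\phi=\psi\circ h$ as morphisms of group schemes. Uniqueness of $\psi$ follows from $\alpha$ being an epimorphism: the condition $\psi\circ\alpha=\phi\circ i_1$ already determines $\psi$ uniquely.

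The step I expect to require the most care is establishing that "the image of $\psi_0$ is commutative implies $\psi_0$ factors through $G_{\textrm{ab}}$" at the level of group schemes, rather than merely on abstract groups. For affine group schemes over $k$ the abelianization exists and possesses the expected universal property, so this reduces to a formal verification, but it is the only step where a naive set-theoretic argument could lead one astray. Everything else is routine functorial manipulation of $R$-points.
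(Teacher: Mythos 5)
Your proof is correct, and it is essentially the standard argument: the paper itself gives no proof of this lemma, deferring to Section 4.1 of \cite{PS20}, where the same strategy is used (restrict $\phi$ to the coordinate embeddings $i_j$, use disjointness of coordinates to see that the common restriction $\psi_0$ has commutative image and hence factors through $G_{\textrm{ab}}$, then recover $\phi$ from the decomposition $(g_1,\dots,g_n)=i_1(g_1)\cdots i_n(g_n)$). You correctly flag the one non-set-theoretic step, namely that triviality of the pulled-back commutator along the faithfully flat map $G\to\psi_0(G)$ forces the scheme-theoretic image to be commutative, which is exactly what makes the universal property of $\alpha:G\to G_{\textrm{ab}}$ applicable.
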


\subsection{The group scheme $ \pi^{\textrm{alg}}(X, x)$}

Classically, over $\mathbb{C}$, the Riemann-Hilbert correspondence identifies the category of vector bundles equipped with integrable connections on  a smooth connected projective variety $X$/$\mathbb{C}$ with the category of representations of the topological fundamental group $\pi^{\textrm{top}}(X,x)$ for some chosen base point $x$. Via GAGA, this gives a purely algebraic description of the category of representations of the topological fundamental group $\pi(X,x)$. This category (equipped with the fiber functor $(E, \nabla) \to E_x$) is a neutral Tannakian category and can be identified, via the Tannakian formalism, with the representation category of the proalgebraic completion of the topological fundamental group, denoted as $\pi^{\textrm{top}}(X,x)^{\textrm{alg}}$.
\\

Over a field $k$ of characteristic $0$, the category of flat connections on a smooth variety $X$ is tensor equivalent to the category of $\sD_X$-modules. However over a field of characteristic $p$, the category of flat connections on $X$ is not as well behaved as the category of $\sD_X$-modules and one defines a fundamental group scheme for $X$ by Tannakian formalism using the category of $\sD_X$-modules. By Katz's theorem mentioned before, the fundamental group coincides with the one defined using $\mathcal{S}(X)$ below.
\\

Let $x \in X(k)$ be a $k$-rational point. Then the abelian rigid tensor category $\mathcal{S}(X)$ is neutralized by the fiber functor

$$T_x: \mathcal{S}(X) \to Vec_k$$

The fundamental group scheme defined by the neutral Tannakian category $(\mathcal{S}(X), \otimes, T_x, (\sO_X, F^*))$ is called the algebraic fundamental group of $X$ based at $x$ and is denoted by $\pi^{\textrm{alg}}(X,x)$.
\\

The following basic properties of $\pi^{\textrm{alg}}$ are well known.

\begin{itemize}
\item (Independence of basepoint) Let $X$ be a geometrically connected, smooth projective $k$-scheme. Then for all $x_1, x_2 \in X(k)$, one has

$$\pi^{\textrm{alg}}(X, x_1) \simeq \pi^{\textrm{alg}}(X, x_2)$$

\item (Product rule) \label{product}
For $X_1, X_2$ geometrically connected and smooth over $k$ and $x_i \in X_i(k)$, there is an isomorphism

$$\pi^{\textrm{alg}}(X_1, x_1) \times \pi^{\textrm{alg}}(X_2, x_2) \to \pi^{\textrm{alg}}(X_1\times X_2, (x_1, x_2))$$

\item For $X$ smooth and open immersion $U \xrightarrow{i} X$  such that the complement of $U$ in $X$ has codimension $\geq 2$ and $x \in U(k)$, then the homomorphism $$\pi^{\textrm{alg}}(U, x) \to \pi^{\textrm{alg}}(X, x)$$ associated to the restriction functor $i^*: \mathcal{S}(X) \to \mathcal{S}(U)$ is an isomorphism. 
\end{itemize}

\section{Geometry of Hilbert Scheme of points}\label{hsnpt}
 
Let $S$ be a smooth projective surface over $k$. We fix notation as follows

\begin{itemize}
    \item $S^n$ denotes the $n$-fold cartesian product of $S$ with itself.
    \item $S^{(n)}$ denotes the $n$th symmetric product of $S$ defined as the quotient $S^n/\mathfrak{S}_n$, where $\mathfrak{S}_n$ denotes the symmetric group on $n$ letters.
    \item $S^{[n]}$ denotes the Hilbert scheme of $n$ points on $S$.
\end{itemize}

    Let $\rho: S^n \to S^{(n)}$ be the quotient map and $h: S^{[n]} \to S^{(n)}$ be the Hilbert-Chow morphism. We write $S^{(n)}_{\circ}$ for the open subset of $S^{(n)}$ consisting of distinct points with $S^{[n]}_{\circ} := h^{-1}(S^{(n)}_{\circ})$ and $S^{n}_{\circ} := \rho^{-1}(S^{(n)}_{\circ})$. The map $h_{n, \circ}: S^{[n]}_{\circ} \to S^{(n)}_{\circ}$ is an isomorphism. We have the diagram:
    
    \[\begin{tikzcd}
	{S^{[n]}} && {S^n} \\
	\\
	&& {S^{(n)}}
	\arrow["{h_{n}}", from=1-1, to=3-3]
	\arrow["{\rho_{n}}"', from=1-3, to=3-3]
\end{tikzcd}\]

In general, Hilbert schemes of points on a projective variety display a lot of pathological features. But in \cite{Fog68} the author shows that, in the case of smooth projective surface $S$, $S^{[n]}$ is a smooth projective variety. Thus, in this case, the Hilbert-Chow morphism $h: S^{[n]} \to S^{(n)}$ is a resolution of singularities. 

\

One can consider $S^{(n)}$ as the set of effective $0$-cycles of degree $n$ on $S^{(n)}$. In this case it is easy to see that $S^{(n)}$ admits a stratification by \textit{type}, where the type of a $0$-cycle $y$ of degree $n$ is a tuple $(n_1,\dots, n_r)$ where $y$ can be written as 

$$y = \Sigma_{j=1}^r n_jx_j$$

where $x_j$ are distinct points of $S$ with multiplicities $n_1 \geq n_2 \geq \dots \geq n_r$, where $n_j$ are positive integers. 

Let $C(n_1,\dots, n_r)$ denote the subset of $S^{(n)}$ consisting of points of the type $(n_1,\dots, n_r)$. Let $S^{(n)}_* = C(1,\dots,1) \bigcup C(2,\dots, 1)$ denote the open subset of $S^{(n)}$ consisting of points of type $(1,\dots,1)$ and $(2,\dots,1)$. Let $S^{[n]}_*$ and $S^{n}_*$ denote the preimage of $S^{(n)}_*$ under $h$ and $\rho$ respectively.
\\

We recall some basic properties below which we will need later (we refer to \cite{Fog68}, \cite{PS20} for details).

\begin{itemize}
\item The subsets $C(n_1,\dots, n_r)$ are nonsingular of dimension $2r$.
\item The closed subset $S^{(n)} \setminus S^{(n)}_*$ is of codimension $\geq 2$ in $S^{(n)}$.
\item The closed subset $S^{[n]} \setminus S^{[n]}_*$ is of codimension $2$ in $S^{[n]}$.
\item The closed subset $S^n \setminus S^n_*$ is of codimension $\geq 4$ in $S^n$.
\item The closed subset $S^{(n)}_*\setminus S^{(n)}_{\circ}$ is of codimension $2$ in $S^{(n)}_*$.
\item When characteristic of $k \neq 2$, for $y \in C(2,1,\dots,1)$, the scheme theoretic fiber $h^{-1}(y)$ is isomorphic to $\mathbb{P}^1_k$. In fact, $S^{[n]}_*$ is the blowup of $S^{(n)}_*$ along $C(2,1,\dots,1)$.
\end{itemize}

We end this section by recalling a result of Fogarty (\cite{Fog77}, Proposition 3.6).

\begin{proposition*}
If $L$ is a $\mathfrak{S}_n$-invariant line bundle on $S^n$, there exists a line bundle $L'$ on $S^{(n)}$ such that $h^*L' \simeq L$.
\end{proposition*}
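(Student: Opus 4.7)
The plan is to prove this descent statement by invoking Kempf's descent lemma for the finite quotient map $\rho\colon S^n \to S^{(n)}$ with Galois group $\mathfrak{S}_n$. This lemma asserts that an $\mathfrak{S}_n$-linearized vector bundle on $S^n$ descends to $S^{(n)}$ if and only if, at every point $s \in S^n$, the stabilizer $\mathfrak{S}_n(s)$ acts trivially on the fiber over $s$.

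First I would upgrade the hypothesis ``$L$ is $\mathfrak{S}_n$-invariant'' (i.e.\ the isomorphism-class condition $\sigma^*L \simeq L$ for all $\sigma$) to a genuine $\mathfrak{S}_n$-linearization. The obstruction to such a lift lies in $H^2(\mathfrak{S}_n, k^*)$, and I would dispatch it either by choosing the isomorphisms $\phi_\sigma\colon \sigma^*L \to L$ compatibly on a set of generating transpositions, or by first descending a suitable tensor power of $L$ where the obstruction automatically vanishes. Granted a linearization, the pointwise criterion of Kempf reduces to the following: for each $s = (x_1,\dots,x_n)$ with Young-subgroup stabilizer $H = \prod_j \mathfrak{S}_{n_j}$, the action of $H$ on $L(s)$ must be trivial. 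Since $H$ is generated by transpositions $\tau = (ij)$ swapping equal coordinates, it suffices to check that each such $\tau$ acts by $+1$ on $L(s)$. Here the $\tau$-fixed locus in $S^n$ is smooth and of codimension $2$ (using $\dim S = 2$), so $\tau$ acts on $L(s)$ by a sign $\chi(\tau) \in \{\pm 1\}$; the main task is to prove $\chi(\tau) = 1$ via a local model argument, the point being that a nontrivial sign would force $L$ to behave as a $(-1)$-twist along the pairwise diagonal, incompatibly with the invariance $\sigma^*L \simeq L$.

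Once descent succeeds, the resulting rank-one coherent sheaf $L'$ on $S^{(n)}$ is a line bundle on the smooth open locus $S^{(n)}_{\circ}$ and reflexive everywhere. To verify that $L'$ is in fact locally free across the singular strata of $S^{(n)}$, I would exploit the smooth resolution $h\colon S^{[n]} \to S^{(n)}$: the pullback $h^*L'$ to the smooth variety $S^{[n]}$ is a reflexive rank-one sheaf, hence invertible, and since $h_*\mathcal{O}_{S^{[n]}} = \mathcal{O}_{S^{(n)}}$ (symmetric products of smooth surfaces have rational singularities), tracking it back through $h$ recovers $L'$ as locally free. The main obstacle I anticipate is the sign computation $\chi(\tau) = 1$: this is where the surface hypothesis enters in an essential way through the codimension of the pairwise diagonal, and the verification seems to require an explicit analysis in the formal neighbourhood of a point of type $(2,1,\dots,1)$, precisely the stratum where the singularities of $S^{(n)}$ are born.
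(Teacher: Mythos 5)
This proposition is not proved in the paper at all: it is quoted verbatim from Fogarty (\cite{Fog77}, Proposition~3.6), so there is no internal argument to compare yours against, and I can only assess your outline on its own terms. (Note in passing that the statement as printed should read $\rho^*L'\simeq L$ rather than $h^*L'\simeq L$: the bundle $L$ lives on $S^n$ and $h$ is the Hilbert--Chow morphism, so the pullback must be along the quotient map $\rho$.) Your strategy --- linearize, then apply Kempf's descent criterion for the finite quotient $\rho\colon S^n\to S^{(n)}$ --- is a standard one, but as written it contains gaps that are not merely technical.

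The most concrete problem is the sign step. For a \emph{given} linearization it is simply false that a transposition $\tau$ must act by $+1$ on fibers over its fixed locus: the trivial bundle $\mathcal{O}_{S^n}$ with the linearization twisted by the sign character is $\mathfrak{S}_n$-invariant and has $\chi(\tau)=-1$, so no contradiction with $\sigma^*L\simeq L$ can be extracted, and the local model computation you anticipate cannot succeed. The correct move is not to \emph{prove} $\chi(\tau)=1$ but to \emph{arrange} it: $\chi(\tau)$ is a constant $\pm1$ on the connected fixed locus, the same for all transpositions by conjugacy, and one twists the linearization by the sign character if needed; since Young subgroups are generated by transpositions, all stabilizers then act trivially. (Also, the fact that $\tau$ acts on $L(s)$ by $\pm1$ follows from $\tau^2=\mathrm{id}$ and $\dim L(s)=1$, not from the codimension of the fixed locus; the hypothesis $\dim S=2$ enters elsewhere, namely in making $\rho$ \'etale in codimension one.) Second, the passage from invariance to linearization is not dispatched: $H^2(\mathfrak{S}_n,k^*)\cong\mathbb{Z}/2$ for $n\ge 4$, so the obstruction does not vanish automatically, prescribing $\phi_\sigma$ on generating transpositions does not settle the relations of $\mathfrak{S}_n$, and descending $L^{\otimes 2}$ gives no way to recover a descent of $L$ itself. (In the paper's actual application the line bundles arrive already $\mathfrak{S}_n$-equivariant, so this gap matters only for the proposition as literally stated.) Third, and most serious in positive characteristic: the descent lemma you invoke is proved by averaging over the stabilizer, hence needs the stabilizer order to be prime to $p$; at a point of type $(n_1,\dots,n_r)$ with $p\le n_1$ this fails, and the proposition is asserted over all of $S^{(n)}$ with no bound relating $p$ and $n$. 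This is precisely the regime in which the paper's Lemma~\ref{surj} must assume $\mathrm{char}\,k>n_1$. Fogarty's argument instead works directly with $(\rho_*L)^{\mathfrak{S}_n}$ as a reflexive sheaf on the normal variety $S^{(n)}$, invertible off the codimension-two branch locus, and analyzes the invariant local rings there; your closing step (pulling a reflexive sheaf back through $h$ and ``tracking it back'') does not substitute for this, since pullback does not preserve reflexivity and $h_*$ of an invertible sheaf need not be invertible.
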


It follows that $L'$ in the proposition is isomorphic to $\sigma_*(L)^{\mathfrak{S}_n}$

\section{The functor between Tannakian categories}\label{functor}

Let $S$ be a smooth projective surface over $k$ and $(\sE_i, \alpha_i)$ be a stratified bundle on $S^{[n]}$. Restricting to $S^{[n]}_*$ gives us a functor 

$$i^*: \mathcal{S}(S^{[n]}) \to \mathcal{S}(S^{[n]}_*)$$

which is a equivalence of categories as $S^{[n]}_*$ is the complement of a codimension $2$ closed subset of $S^{[n]}$.
\\

Next we show that a stratified bundle on $S^{[n]}_*$ can be pushed forward under $h$ to get a stratified bundle on $S^{(n)}_*$. First we begin by a result on descent of vector bundles along the morphism $h: S^{[n]}_* \to S^{(n)}_*$. Similar results have been established by authors in \cite{Ish83} and \cite{PS20}.

\begin{proposition}
Assume char $k \neq 2$. Let $\sE$ be a vector bundle on $S^{[n]}_*$ which restricts to trivial vector bundles on the fibers of $h$ over $S^{(n)}_*$. Then $h_*\sE$ is a locally free $\sO_{S^{(n)}_*}$-module. Moreover the natural map

$$h^*h_*(\sE) \to \sE$$

is an isomorphism.
\end{proposition}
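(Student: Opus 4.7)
Since $\text{char }k \neq 2$, the last bullet of Section \ref{hsnpt} gives that $h: S^{[n]}_* \to S^{(n)}_*$ is the blow-up of the smooth codimension-$2$ subvariety $Z := C(2,1,\dots,1)$, with exceptional divisor $E := h^{-1}(Z)$ a $\mathbb{P}^1$-bundle $\pi: E \to Z$. Off $Z$ the map $h$ is an isomorphism, so both statements of the proposition are immediate on $h^{-1}(S^{(n)}_\circ)$, and the task reduces to understanding a neighborhood of each $y \in Z$. The hypothesis feeds directly into the fiber cohomology: every fiber $F := h^{-1}(y)$ is a point or a $\mathbb{P}^1$, and $\sE|_F$ is trivial, so $\dim_{k(y)} H^0(F,\sE|_F) = r := \mathrm{rk}\,\sE$ is constant and $H^i(F,\sE|_F) = 0$ for $i \geq 1$.

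The plan is a cohomology and base change argument for $h$ concluding that $h_*\sE$ is locally free of rank $r$ with fiber at $y$ canonically $H^0(F, \sE|_F)$. The main obstacle is that the classical statement (e.g.\ Hartshorne III.12.11) requires $\sE$ to be flat over the base, which fails since $h$ is a non-trivial blow-up. I would work around this by exploiting the exceptional structure: triviality of $\sE$ along the $\mathbb{P}^1$-fibers of $\pi$ gives, via standard $\mathbb{P}^1$-bundle descent, $\sE|_E \cong \pi^*\sG$ with $\sG := \pi_*(\sE|_E)$ a rank-$r$ vector bundle on $Z$. Combining this with the short exact sequences
$$0 \to \sE(-(m+1)E) \to \sE(-mE) \to \pi^*\sG \otimes \sO_\pi(m) \to 0$$
(using $\sO(-E)|_E = \sO_\pi(1)$) and the vanishing $R^1 \pi_*(\pi^*\sG \otimes \sO_\pi(m)) = 0$ for $m \geq 0$, one can inductively control the derived direct images $R^i h_* \sE(-mE)$. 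Passing to the inverse limit through the theorem on formal functions (which is valid without flatness) then yields $R^1 h_*\sE = 0$ near $y$ and shows $h_*\sE$ is locally free of rank $r$. In the infinitesimal-thickening picture, the key input is that each graded piece of $\sE|_{X^{(n)}_y}$ is a coherent quotient of a trivial $\sO_{\mathbb{P}^1}$-module, which automatically has vanishing $H^1$.

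For the second assertion, $h^*h_*\sE$ and $\sE$ are now both locally free of rank $r$, so by Nakayama it suffices to verify the natural map is surjective on every fiber of $h$. Over $S^{(n)}_\circ$ this is trivial, and for $y \in Z$ the fiberwise map becomes the evaluation $H^0(\mathbb{P}^1, \sE|_F) \otimes_k \sO_{\mathbb{P}^1} \to \sE|_F$, which is an isomorphism because $\sE|_F$ is a trivial vector bundle.
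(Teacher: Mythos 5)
Your geometric setup is right and you correctly identify the real difficulty (cohomology and base change fails because $\sE$ is not flat over $S^{(n)}_*$), but the step that actually matters is the one you assert rather than prove: that $h_*\sE$ is locally free of rank $r$ with fibre $H^0(F,\sE|_F)$. Two problems. First, your filtration is the $E$-adic one, by the sheaves $\sE(-mE)$; the theorem on formal functions at a point $y\in Z$ concerns the inverse limit of $H^i$ over the thickenings of the single fibre $h^{-1}(y)$ cut out by $\mathfrak{m}_y^m\sO_{S^{[n]}_*}$ (a codimension-$3$ subscheme, not the divisor $E$), so ``passing to the inverse limit through the theorem on formal functions'' does not apply to the inverse system you have built. (Your exact sequences do give a correct proof that $R^1h_*\sE=0$, by descending induction starting from relative Serre vanishing for the $h$-ample divisor $-E$ together with $R^1\pi_*(\pi^*\sG\otimes\sO_\pi(m))=0$; but that is a different statement.) Second, and more seriously, $R^1h_*\sE=0$ by itself yields neither local freeness of $h_*\sE$ nor the identification $(h_*\sE)\otimes k(y)\simeq H^0(F,\sE|_F)$ on which your Nakayama argument for the second assertion depends --- without flatness there is no implication from ``$R^1$ vanishes'' to ``$h^0$ commutes with base change,'' which is precisely the obstacle you flagged at the outset and then stepped around.

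The paper closes exactly this gap differently: it filters by powers of $\sJ=\mathfrak{m}_y\sO_{S^{[n]}_*}$, the ideal of the fibre, so that formal functions genuinely applies; it shows $H^1$ of the relevant sheaves on each thickening vanishes by exhibiting every graded piece $\sJ^m/\sJ^{m+1}$ as a globally generated bundle on $\mathbb{P}^1$; it deduces that $h_*\sE\to H^0(h^{-1}(y),\sE)$ is surjective near $y$; and it then lifts a basis of $H^0$ to build an explicit isomorphism $\sO^{\oplus r}\simeq\sE$ on a neighbourhood of the fibre, after which local freeness of $h_*\sE$ and the isomorphism $h^*h_*\sE\to\sE$ both follow from $h_*\sO\simeq\sO$ (Zariski's main theorem on the normal variety $S^{(n)}_*$). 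To repair your write-up you would need an argument of this kind --- surjectivity of $h_*\sE$ onto $H^0$ of the fibre plus an explicit local trivialization, or a direct computation of $\varprojlim H^0$ over the $\mathfrak{m}_y$-adic thickenings showing the completion of the stalk is free of rank $r$ --- rather than an appeal to base change.
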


\begin{proof}
Let $x \in S^{(n)}_*$ be a point of type $(2,1,\dots , 1)$. Then by assumption, the fiber of $h$ over $x$ is isomorphic to $\mathbb{P}^1_k$. Let $\mathcal{J}$ be the ideal sheaf of the closed subscheme $h^{-1}(x)$ and $\sI_x$ be the ideal sheaf of the closed point $x$. We have 

$$\mathcal{J} = \sI_x\sO_{S^{[n]}_*}$$

For all $n \geq 1$, let $Y_n$ denote the closed subscheme of $S^{[n]}_*$ corresponding to the ideal sheaf $\mathcal{J}^n$. Consider the following short exact sequence of sheaves on $S^{[n]}_*$

$$0 \to \sJ\otimes\sE \to \sE \to \sE|_{Y_1} \to 0$$

Pushing forward by $h$, we get the following exact sequence of sheaves on $S^{(n)}_*$

$$h_*\sE \to H^0(Y_1, \sE|_{Y_1}) \to R^1h_*(\sJ\otimes \sE)$$

We claim that the completion of $R^1h_*(\sJ\otimes \sE)$ at the maximal ideal $m_x$ in $\sO_{S^{(n)}_*, x}$ is $0$. The proof uses the theorem of formal functions which says that 

$$(R^1h_*(\sJ\otimes \sE))^{\wedge} \simeq \varprojlim H^1(Y_n, \sJ\otimes \sE \otimes \sO_{S^{[n]}_*}/\sJ^n)$$

We prove by induction that $H^1(Y_n, \sJ\otimes \sE \otimes \sO_{S^{[n]}_*}/\sJ^n) = 0$. As $Y_1 \simeq \mathbb{P}^1_k$, the sheaves $\sJ^n/\sJ^{n+1}$ are locally free. These sheaves are also globally generated over $Y_1$ as we have the surjection 

$$m_x^n/m_x^{n+1}\otimes_{\sO_{S^{(n)}_*, x}} \sO_{S^{[n]}_*} \simeq \sI_x^n/\sI_x^{n+1}\otimes_{\sO_{S^{(n)}_*}} \sO_{S^{[n]}_*} \twoheadrightarrow \sJ^n/\sJ^{n+1}$$

As $\sJ^n/\sJ^{n+1}$ is locally free on $Y_1 \simeq \mathbb{P}^1_k$ and globally generated, it is a direct sum of line bundles each of which has degree $\geq 0$. Thus one gets the base case of induction from degree considerations, as

$$H^1(Y_1, \sJ\otimes \sE \otimes \sO_{S^{[n]}_*}/\sJ = H^1(Y_1, \sJ/\sJ^2 \otimes \sE_{Y_1}) = 0$$

Assume that the claim is true for $n$. Then the proof for $n+1$ follows from the long exact sequence in cohomology attached to the short exact sequence of sheaves on $Y_{n+1}$

$$0 \to \sJ^{n+1}/\sJ^{n+2}\otimes \sE \to \sJ/\sJ^{n+2}\otimes \sE \to \sJ/\sJ^{n+1}\otimes \sE \to 0$$

which gives us the exact sequence

$$ H^1 (Y_{n+1}, \sJ^{n+1}/\sJ^{n+2}\otimes \sE) \to H^1(Y_{n+1}, \sJ/\sJ^{n+2}\otimes \sE) \to H^1(Y_{n+1} , \sJ/\sJ^{n+1}\otimes \sE)$$

We know $H^1 (Y_n, \sJ^{n+1}/\sJ^{n+2}\otimes \sE) = H^1 (Y_1, \sJ^{n+1}/\sJ^{n+2}\otimes \sE) = 0$ (by degree consideration) and $H^1(Y_{n+1}, \sJ/\sJ^{n+1}\otimes \sE) = H^1(Y_n, \sJ/\sJ^{n+1}\otimes \sE) = 0$ (by induction hypothesis), thus we get 

$$ H^1(Y_{n+1}, \sJ/\sJ^{n+2}\otimes \sE) = 0$$

Thus the stalk of $R^1h_*(\sJ\otimes \sE)$ at $x$ is $0$.
\\

This shows that the natural map $h_*\sE \to H^0(Y_1, \sE|_{Y_1})$ is surjective in a neighbourhood of $x$. Let $f_1,...,f_r$ be a basis of $H^0(Y_1, \sE|_{Y_1})$. Let $\textrm{Spec}(R)$ be an affine neighbourhood of $x$ where the natural map is surjective and let $\tilde{f_i} \in \Gamma(\textrm{Spec}(R), h_*\sE) = \Gamma (h^{-1}(\textrm{Spec}(R)), \sE)$ be lifts of $f_i$. Using $\tilde{f}_i$ one defines a homomorphism

$$\sO_{S^{[n]}_*}^{\oplus r}|_{h^{-1}(\textrm{Spec}(R))} \to \sE$$

on $h^{-1}(\textrm{Spec}(R)$ which is a surjection (and hence an isomorphism) on $Y_1$. As $h$ is proper, there exists a smaller affine neighbourhood $U$ of $x$ over which there is an isomorphism

$$\sO_V^{\oplus r} \simeq \sE$$

where $V = h^{-1}(U)$. Applying $h_*$, we get

$$(h_*\sO_V)^{\oplus r} \simeq h_*\sE $$

As $S^{(n)}_*$ is normal and $h: S^{[n]}_* \to S^{(n)}_*$ is birational with connected fibers, by a form of Zariski's main theorem [cf \cite{Har77}, Corollary 11.3 and 11.4], we have that $h_*\sO_V \simeq \sO_U$ and thus $h_*\sE$ is locally free. The natural morphism

$$h^*h_*(\sE) \to \sE$$

is clearly an isomorphism.

\end{proof} 

Let $\mathbb{VB}_{S^{(n)}_*}$ be the category of locally free sheaves on $S^{(n)}_*$ and $\mathbb{VB}^h_{S^{[n]}_*}$ be the category of locally free sheaves on $S^{[n]}_*$ which restrict to trivial vector bundles on the fibers of $h$. Proposition 1 above gives us an equivalence of categories.

\begin{proposition}
Assume char $k \neq 2$. The pushforward functor 
$$h_*: \mathbb{VB}^h_{S^{[n]}_*} \to \mathbb{VB}_{S^{(n)}_*}$$ 

is an equivalence of categories with the inverse given by 

$$h^*: \mathbb{VB}_{S^{(n)}_*} \to  \mathbb{VB}^h_{S^{[n]}_*}$$
\end{proposition}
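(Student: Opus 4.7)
The plan is to verify that $h^*$ lands in the claimed subcategory, check both composites are naturally isomorphic to the identity, and then assemble the pieces into an equivalence.

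First I would check that $h^*$ actually takes values in $\mathbb{VB}^h_{S^{[n]}_*}$. Pullback of a locally free sheaf is locally free, so $h^*\sF$ is a vector bundle on $S^{[n]}_*$. For the fiber condition, note that over $S^{(n)}_*$ the fibers of $h$ are either single reduced points (over $C(1,\dots,1)$) or copies of $\mathbb{P}^1_k$ (over $C(2,1,\dots,1)$, using char $k\neq 2$). In either case, for any $x \in S^{(n)}_*$, the restriction $h^*\sF|_{h^{-1}(x)}$ is canonically isomorphic to $\sF_x \otimes_{k(x)} \sO_{h^{-1}(x)}$, which is trivial of rank $\operatorname{rk}(\sF)$. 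Hence $h^*\sF \in \mathbb{VB}^h_{S^{[n]}_*}$.

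Next, Proposition 1 already supplies one half of the equivalence: for $\sE \in \mathbb{VB}^h_{S^{[n]}_*}$, the pushforward $h_*\sE$ is locally free on $S^{(n)}_*$ and the counit $h^*h_*\sE \to \sE$ is an isomorphism. For the other direction, given $\sF \in \mathbb{VB}_{S^{(n)}_*}$, I would invoke the projection formula to obtain a natural isomorphism
$$h_*h^*\sF \;\simeq\; \sF \otimes_{\sO_{S^{(n)}_*}} h_*\sO_{S^{[n]}_*}.$$
The main point is then to identify $h_*\sO_{S^{[n]}_*}$ with $\sO_{S^{(n)}_*}$. This is exactly the application of Zariski's main theorem already invoked at the end of the proof of Proposition 1: $S^{(n)}_*$ is normal, and $h\colon S^{[n]}_* \to S^{(n)}_*$ is proper and birational with connected fibers (the exceptional fibers being $\mathbb{P}^1_k$). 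Therefore $h_*\sO_{S^{[n]}_*} \simeq \sO_{S^{(n)}_*}$, and the unit $\sF \to h_*h^*\sF$ is an isomorphism.

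Finally, I would package these two natural isomorphisms into the statement that $h^*$ and $h_*$ are quasi-inverse functors, using the standard adjunction $\operatorname{Hom}_{S^{[n]}_*}(h^*\sF,\sE)\simeq \operatorname{Hom}_{S^{(n)}_*}(\sF, h_*\sE)$: since both the unit and counit of this adjunction are now isomorphisms on the restricted subcategories, the pair $(h^*,h_*)$ defines an equivalence of categories. The only genuine obstacle is the identification $h_*\sO_{S^{[n]}_*} \simeq \sO_{S^{(n)}_*}$, and this is already handled by the same Zariski's main theorem input used in the preceding proposition, so the argument reduces to bookkeeping.
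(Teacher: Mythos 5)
Your argument is correct and follows essentially the same route as the paper's (much terser) proof: both reduce the equivalence to the counit isomorphism $h^*h_*\sE \to \sE$ supplied by Proposition 1 and to the identification $h_*\sO_{S^{[n]}_*} \simeq \sO_{S^{(n)}_*}$ coming from Zariski's main theorem, the latter giving $h_*h^*\sF \simeq \sF$ via the projection formula. Your write-up simply makes explicit two points the paper leaves implicit, namely that $h^*$ lands in $\mathbb{VB}^h_{S^{[n]}_*}$ and that the bijectivity of the Hom-map follows from the adjunction once unit and counit are isomorphisms.
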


\begin{proof} 
We observe that if $\sE' \simeq h^*(\sE)$, then $\sE \simeq h_* \sE'$. This shows that $h_*$ is essentially surjective. The natural map

$$\textrm{Hom}_{S^{(n)}_*}(h_* \sE, h_* \sF) \to \textrm{Hom}_{S^{[n]}_*}(\sE, \sF)$$

is bijective. Thus $h_*$ is an equivalence of categories. 
\end{proof}

\begin{corollary*}
For all $\sE \in \mathbb{VB}^h_{S^{[n]}_*}$, the natural map $$F^*h_*(\sE) \to h_*F^*(\sE)$$ is an isomorphism over $S^{(n)}_*$.
\end{corollary*}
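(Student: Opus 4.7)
The plan is to deduce the corollary formally from the preceding proposition, exploiting the fact that the absolute Frobenius is natural in the scheme.

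First I would note that since $F$ commutes with every morphism of $\mathbb{F}_p$-schemes, one has the identity $h \circ F_{S^{[n]}_*} = F_{S^{(n)}_*} \circ h$, yielding canonical isomorphisms of functors $F^*h^* \simeq h^*F^*$. The ``natural map'' in the statement is the base change morphism $F^*h_*\sE \to h_*F^*\sE$ induced from this via the counit of the adjunction $h^* \dashv h_*$.

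Next I would verify that $F^*\sE \in \mathbb{VB}^h_{S^{[n]}_*}$, so that Proposition 2 applies to it. Set $\sF := h_*\sE$; by the preceding proposition $\sF$ is locally free on $S^{(n)}_*$ and the counit $h^*\sF \to \sE$ is an isomorphism. Pulling back by Frobenius gives
$$F^*\sE \;\simeq\; F^*h^*\sF \;\simeq\; h^*F^*\sF.$$
Since Frobenius pullback (computed as tensor product along $F : \sO \to \sO$) preserves local freeness, $F^*\sF$ is a vector bundle on $S^{(n)}_*$, so $F^*\sE$ is the $h$-pullback of a vector bundle; in particular its restriction to any fiber $h^{-1}(x) \simeq \mathbb{P}^1_k$ (for $x$ of type $(2,1,\dots,1)$) is trivial.

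Now I would apply Proposition 2 once more to $F^*\sE$: the equivalence of categories gives $h_*F^*\sE \simeq h_*h^*(F^*\sF) \simeq F^*\sF = F^*h_*\sE$, where the middle isomorphism uses the fact that $h_* \circ h^* \simeq \textrm{id}$ on $\mathbb{VB}_{S^{(n)}_*}$. The only remaining point, and the main (minor) obstacle, is to verify that this abstract isomorphism coincides with the canonical base change map. This follows from a short diagram chase: both morphisms are characterized uniquely by how they interact under $h^*$ with the counit $h^*h_*\sE \to \sE$, and the identification $F^*h^* \simeq h^*F^*$ is constructed to be compatible with this characterization. Thus the base change map is an isomorphism, as required.
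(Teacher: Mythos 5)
Your proof is correct, but it reaches the conclusion by a genuinely different route than the paper. The shared starting point is the observation that $F^*\sE$ again lies in $\mathbb{VB}^h_{S^{[n]}_*}$ (you justify this by writing $F^*\sE \simeq h^*F^*h_*\sE$, a pullback of a vector bundle from $S^{(n)}_*$; the paper simply asserts it), so that source and target of the base change map are locally free of the same rank. From there the paper argues concretely: it reduces to surjectivity, uses flatness of the Frobenius over the smooth locus, and at a point $x$ of type $(2,1,\dots,1)$ identifies the fibers of $F^*h_*\sE$ and $h_*F^*\sE$ at $x$ with $H^0(Y_1,\sE|_{Y_1})$ and $H^0(Y_1,F^*(\sE|_{Y_1}))$ respectively (an identification resting on cohomology and base change, available because of the $H^1$-vanishing established in the proof of Proposition 1), the induced map being the visibly surjective Frobenius on global sections. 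You instead stay entirely inside the adjunction formalism: writing $\theta$ for the base change map and $\epsilon$ for the counit of $h^*\dashv h_*$, the defining triangle $\epsilon_{F^*\sE}\circ h^*\theta = F^*\epsilon_{\sE}\circ(\textrm{exchange})$ shows that $h^*\theta$ is a composite of isomorphisms, since the counit is an isomorphism on all of $\mathbb{VB}^h_{S^{[n]}_*}$ by Proposition 1; and $h^*$ reflects isomorphisms between vector bundles on $S^{(n)}_*$ by the equivalence of Proposition 2. Your ``short diagram chase'' at the end is exactly this triangle, so it does close, though you should state it explicitly rather than gesture at a uniqueness characterization. The categorical route buys you independence from any pointwise computation (in particular from base change at the non-smooth points of $S^{(n)}_*$); the paper's route is more elementary and makes the map explicit on fibers, which is in the spirit of how the surrounding propositions are proved.
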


\begin{proof}
As $F^*\sE$ is also an object of $\mathbb{VB}^h_{S^{[n]}_*}$, thus both sheaves are locally free of the same rank. Thus it suffices to show that the natural map

$$F^*h_*(\sE) \to h_*F^*(\sE)$$

is surjective. As $F$ is faithfully flat on the smooth locus of $S^{(n)}_*$, the claim holds on the smooth locus. Let $x \in S^{(n)}_*$ be of type $(2,1,\dots, 1)$. Then the restriction of $F^*h_*(\sE)$ to $x$ is naturally isomorphic to $H^0(Y_1, \sE|_{Y_1})$ and the restriction of $ h_*F^*(\sE)$ to $x$ is $H^0(Y_1, F^*(\sE|_{Y_1})$. The restriction of the natural map to $x$ is the map 

$$F^*: H^0(Y_1, \sE_1) \to H^0(Y_1, F^*\sE_1)$$

which is surjective. 
\end{proof}

By Theorem $2.2$ of \cite{Gie75}, we have that every stratified bundle on $\mathbb{P}^1_k$ is trivial. Thus the above results give us

\begin{proposition}
Assume char $k \neq 2$. Let $(\sE_i, \alpha_i)$ be a stratified bundle on $S^{[n]}_*$. Then $h_*(\sE_i)$ is locally free $\sO_{S^{(n)}_*}$-module for all $i \in \mathbb{N}$. Moreover the natural map

$$h^*h_*(\sE_i) \to \sE_i$$

is an isomorphism. Furthermore the natural map $$F^*h_*(\sE_i) \to h_*F^*(\sE_i)$$ is an isomorphism over $S^{(n)}_*$.
\end{proposition}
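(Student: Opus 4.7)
The plan is to reduce the proposition to a termwise application of Proposition 1 (local freeness and the isomorphism $h^*h_*\sE \to \sE$) and the Corollary to Proposition 2 (Frobenius-compatibility). To do so, I need to verify that each constituent sheaf $\sE_i$ lies in $\mathbb{VB}^h_{S^{[n]}_*}$, i.e.\ that its restriction to every fiber of $h$ is a trivial vector bundle.

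First I would treat the fibers over $S^{(n)}_{\circ}$ as a trivial case: there $h$ is an isomorphism, so the fibers are single reduced points and triviality is automatic. The only nontrivial fibers therefore lie over points $x \in C(2,1,\dots,1)$, and because char $k \neq 2$, Section \ref{hsnpt} ensures that each such fiber is isomorphic to $\mathbb{P}^1_k$. I would then consider the closed immersion $j: h^{-1}(x) \hookrightarrow S^{[n]}_*$. By the pullback construction recalled at the start of Section \ref{strat}, $j^*(\sE_i, \alpha_i)$ is a stratified bundle on $\mathbb{P}^1_k$, with underlying sheaves $j^*\sE_i$. Invoking Theorem $2.2$ of \cite{Gie75}, already cited above the proposition, which asserts that every stratified bundle on $\mathbb{P}^1_k$ is trivial, I conclude that $j^*\sE_i$ is a trivial vector bundle for every $i$. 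Hence each $\sE_i$ belongs to $\mathbb{VB}^h_{S^{[n]}_*}$.

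Having placed each $\sE_i$ in $\mathbb{VB}^h_{S^{[n]}_*}$, the three assertions of the proposition now follow termwise. Proposition 1 applied to $\sE_i$ yields local freeness of $h_*\sE_i$ on $S^{(n)}_*$ and the isomorphism $h^*h_*\sE_i \to \sE_i$; the Corollary to Proposition 2 applied to $\sE_i$ yields that $F^*h_*\sE_i \to h_*F^*\sE_i$ is an isomorphism on $S^{(n)}_*$.

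There is no serious obstacle here: all the analytic work has already been carried out in Propositions 1 and 2, and the only additional ingredient is Gieseker's triviality theorem on $\mathbb{P}^1_k$. The one point worth verifying carefully is that the restriction of a stratified bundle along a closed immersion really is a stratified bundle on the subscheme, but this is immediate from the definition of the pullback functor $j^*$ on $\mathcal{S}(-)$ recorded in Section \ref{strat}.
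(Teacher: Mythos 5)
Your proposal is correct and is essentially the paper's own argument: the paper derives this proposition in one line from Gieseker's Theorem 2.2 (triviality of stratified bundles on $\mathbb{P}^1_k$) combined with Propositions 1 and 2 and the Corollary, exactly as you do. Your write-up simply makes explicit the step the paper leaves implicit, namely that restricting $(\sE_i,\alpha_i)$ to a fiber $h^{-1}(x)\simeq\mathbb{P}^1_k$ yields a stratified bundle there, so each $\sE_i$ lies in $\mathbb{VB}^h_{S^{[n]}_*}$.
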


This allows us to define the pushforward of a stratified bundle $(\sE_i, \alpha_i)$ on $S^{[n]}_*$. The pushforward denoted $h_*(\sE_i, \alpha_i)$ is given by the sequence of vector bundles $h_*\sE_i$ for all $i \in \mathbb{N}$ and the isomorphisms are given by the composite

$$F^*h_*(\sE_{i+1}) \xrightarrow{\eta_{\sE_{i+1}}} h_*F^*(\sE_{i+1}) \xrightarrow{h_*(\alpha_i)} h_*(\sE_i)$$

where $\eta: F^*h_* \to h_*F^*$ is the natural transformation.
\\

\pagebreak 

Thus we get a functor 

$$h_*: \mathcal{S}(S^{[n]}_*) \to \mathcal{S}(S^{(n)}_*)$$

$h_*$ is additive tensor functor as on the smooth locus $S^{(n)}_{\circ}$ we have the isomorphisms

$$h_*((\sE_i, \alpha_i) \oplus (\sF_i, \beta_i))|_{S^{(n)}_{\circ}} \simeq h_*(\sE_i, \alpha_i)|_{S^{(n)}_{\circ}} \oplus h_*(\sF_i, \beta_i))|_{S^{(n)}_{\circ}}$$

$$h_*((\sE_i, \alpha_i) \otimes (\sF_i, \beta_i))|_{S^{(n)}_{\circ}} \simeq h_*(\sE_i, \alpha_i)|_{S^{(n)}_{\circ}} \otimes h_*(\sF_i, \beta_i))|_{S^{(n)}_{\circ}}$$

which extend to $S^{(n)}_*$ due to codimension reasons.
\\

The following commutative diagram shows that $h^*h_*(\sE_i, \alpha_i)$ is isomorphic to $(\sE_i, \alpha_i)$ as stratified bundles with the isomorphism given by the natural morphisms $h^*h_*\sE_i \to \sE_i$.

\begin{center}
\begin{tikzcd}
	{F^*h^*h_*\mathcal{E}_{i+1}} && {h^*F^*h_*\mathcal{E}_{i+1}} && {h^*h_*F^*\mathcal{E}_{i+1}} && {h^*h_*\mathcal{E}_{i}} \\
	\\
	{F^*\mathcal{E}_{i+1}} &&&& {F^*\mathcal{E}_{i+1}} && {\mathcal{E}_i}
	\arrow["{h^*\eta_{\mathcal{E}_{i+1}}}", from=1-3, to=1-5]
	\arrow["{h_*h^*\alpha_i}", from=1-5, to=1-7]
	\arrow[from=1-7, to=3-7]
	\arrow[from=1-5, to=3-5]
	\arrow["{\alpha_i}", from=3-5, to=3-7]
	\arrow["{\gamma_{h_*\mathcal{E}_{i+1}}}", from=1-1, to=1-3]
	\arrow[from=1-1, to=3-1]
	\arrow["{=}", from=3-1, to=3-5]
\end{tikzcd}

\end{center}

Consider the pullback functor

$$\rho^* : \mathcal{S}(S^{(n)}_*) \to \mathcal{S}(S^{n}_*)$$

which takes values in the category of $\mathfrak{S}_n$-equivariant stratified bundles on $S^{n}_*$. Also we have the extension functor

$$j_*:\mathcal{S}(S^{n}_*) \to \mathcal{S}(S^n)$$

which is an equivalence of categories. Composing these functors together, we get a functor 

$$T: \mathcal{S}(S^{[n]}) \to \mathcal{S}(S^n)$$

given by $$T = j_*\circ \rho^*\circ h_*\circ i^*$$

Clearly $T$ is an additive tensor functor. Note that $h_*$ is fully faithful, $\rho^* : \mathcal{S}(S^{(n)}_*) \to \mathcal{S}(S^{n}_*)$ is fully faithful (as $\rho: S^n_{\circ} \to S^{(n)}_{\circ}$ is finite \'etale) and $j_*:\mathcal{S}(S^{n}_*) \to \mathcal{S}(S^n)$ is an equivalence of categories (due to codimension reasons). Thus $T$ is fully faithful.

\subsection{The homomorphism}

Fix $n$ distinct $k$-valued points $x_1,\dots , x_k \in S(k)$. Let $\tilde{x} \in S^{[n]}$ such that $h(\tilde{x}) = \sigma(x_1,\dots , x_n) = z \in S^{(n)}_{\circ}$. Then the categories $\mathcal{S}(S^{[n]})$ and  $\mathcal{S}(S^n)$ are neutralized by the respective fiber functors 

$$\tau_{\tilde{x}}: \mathcal{S}(S^{[n]}) \to \textrm{Vec}_k$$
$$(\sE_i, \alpha_i) \mapsto (\sE_0)_{\tilde{x}}$$

$$\tau_{(x_1,\dots , x_n)}: \mathcal{S}(S^n) \to \textrm{Vec}_k$$
$$(\sF_i, \beta_i) \mapsto (\sF_0)_{(x_1,\dots , x_n)}$$

If $T((\sE_i, \alpha_i)) = (\sF_i, \beta_i)$ that we have natural isomorphisms $(\sE_0)_{\tilde{x}} \simeq h_*(\sE_0)_{z}\simeq (\sF_0)_{(x_1,\dots , x_n)}$.
\\

Thus we have a functor of Tannakian categories 

$$T: (\mathcal{S}(S^{[n]}), \otimes, \tau_{\tilde{x}}, (\sO_{S^{[n]}}, d)) \to (\mathcal{S}(S^{n}), \otimes, \tau_{(x_1,\dots , x_n)}, (\sO_{S^{n}}, d))$$ 
\\

which by the independence of basepoint property of $\mathcal{S}$ induces a functor of Tannakian categories

$$T: (\mathcal{S}(S^{[n]}), \otimes, \tau_{\tilde{nx}}, (\sO_{S^{[n]}}, d)) \to (\mathcal{S}(S^{n}), \otimes, \tau_{(x,\dots , x)}, (\sO_{S^{n}}, d))$$ 

and hence a morphisms of the associated fundamental group schemes

$$\tilde{f}:  \pi^{\textrm{alg}}(S^n, (x,\dots , x)) \to  \pi^{\textrm{alg}}(S^{[n]}, \tilde{nx})$$

Note that by proposition \ref{product} we have

$$ \pi^{\textrm{alg}}(S^n, (x,\dots , x))\simeq  \pi^{\textrm{alg}}(S, x)^n$$.

As $$T: (\mathcal{S}(S^{[n]}), \otimes, T_{\tilde{nx}}, (\sO_{S^{[n]}}, d)) \to (\mathcal{S}(S^{n}), \otimes, T_{(x,\dots , x)}, (\sO_{S^{n}}, d))$$ 

takes stratified bundles on $S^{[n]}$ to $\mathfrak{S}_n$-equivariant stratified bundles on $S^n$ and a $\mathfrak{S}_n$-equivariant stratified bundles on $S^n$ corresponds to a $\mathfrak{S}_n$-invariant representation of $ \pi^{\textrm{alg}}(S, x)^n$, by \ref{grpschlemm}, $\tilde{f}$ factors uniquely through 

$$f:  \pi^{\textrm{alg}}(S, x)_{\textrm{ab}} \to  \pi^{\textrm{alg}}(S^{[n]}, \tilde{nx})$$

\section{Isomorphism of fundamental group schemes}\label{iso}

In this section, we show that $f$ is an isomorphism of affine group schemes. We begin by proving a result about $\mathfrak{S}_n$-equivariant stratified line bundles on $S^n$.

\begin{proposition}
Let $(L_i, \alpha_i)$ be a $\mathfrak{S}_n$-equivariant stratified line bundles on $S^n$. Then there exists a stratified line bundle $(\sL_i, \beta_i)$ such that $\rho^*(\sL_i, \beta_i) \simeq (L_i, \alpha_i)$
\end{proposition}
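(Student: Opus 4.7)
The plan is to descend the stratified line bundle $(L_i, \alpha_i)$ from $S^n$ to $S^{(n)}$ one level at a time: first descending each line bundle $L_i$ via Fogarty's theorem, and then showing the stratification isomorphisms $\alpha_i$ themselves descend.

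For each $i$, Fogarty's result (stated at the end of Section~\ref{hsnpt}) applied to the $\mathfrak{S}_n$-equivariant line bundle $L_i$ on $S^n$ yields a line bundle $\sL_i := (\rho_* L_i)^{\mathfrak{S}_n}$ on $S^{(n)}$ together with a canonical $\mathfrak{S}_n$-equivariant isomorphism $\rho^* \sL_i \simeq L_i$. This produces the candidate sequence of line bundles that will serve as the underlying bundles of the descended stratified line bundle.

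To descend each $\alpha_i : F^* L_{i+1} \to L_i$ to an isomorphism $\beta_i : F^* \sL_{i+1} \to \sL_i$, I would first work over the open locus $S^{(n)}_\circ$, where $\rho_\circ : S^n_\circ \to S^{(n)}_\circ$ is finite \'etale Galois with group $\mathfrak{S}_n$. Because the absolute Frobenius commutes with \'etale morphisms, $F^* L_{i+1}|_{S^n_\circ}$ is canonically identified as a $\mathfrak{S}_n$-equivariant line bundle with $\rho_\circ^*(F^* \sL_{i+1})$; Galois \'etale descent then converts the $\mathfrak{S}_n$-equivariant isomorphism $\alpha_i|_{S^n_\circ}$ into an isomorphism $\beta_i^\circ : F^* \sL_{i+1}|_{S^{(n)}_\circ} \to \sL_i|_{S^{(n)}_\circ}$. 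To extend $\beta_i^\circ$ across the complement of $S^{(n)}_\circ$, I would use that $S^{(n)}$ is normal (being the finite-group quotient of the smooth $S^n$) and that $S^{(n)} \setminus S^{(n)}_\circ$ has codimension at least $2$ in $S^{(n)}$ (by the codimension facts of Section~\ref{hsnpt}): viewing $\beta_i^\circ$ as a section of the line bundle $\sL_i \otimes (F^* \sL_{i+1})^{-1}$, Hartogs' lemma gives a unique extension to a morphism $\beta_i$ on all of $S^{(n)}$. The equality $\rho^* \beta_i = \alpha_i$, which holds on the dense open $S^n_\circ$, then propagates to all of $S^n$ by the same Hartogs argument (now on the smooth $S^n$, whose big-diagonal complement has codimension at least $2$); faithful flatness of $\rho$ then forces $\beta_i$ to be an isomorphism.

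Stringing the $\beta_i$ together yields the required stratified line bundle $(\sL_i, \beta_i)$ on $S^{(n)}$ with $\rho^*(\sL_i, \beta_i) \simeq (L_i, \alpha_i)$. I expect the main obstacle to be the descent of the stratification isomorphisms across the diagonal locus of $S^{(n)}$, where $\rho$ fails to be \'etale; but since the $\beta_i$ are morphisms between line bundles on a normal scheme, this reduces to a Hartogs-type extension across a codimension-two subset, for which the codimension bounds of Section~\ref{hsnpt} suffice.
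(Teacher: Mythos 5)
Your construction of the underlying line bundles $\sL_i := (\rho_*L_i)^{\mathfrak{S}_n}$ via Fogarty's theorem is exactly what the paper does, but you descend the stratification maps by a genuinely different mechanism. The paper stays global: it applies $(\rho_*(-))^{\mathfrak{S}_n}$ to $\alpha_i$ and then proves that the base-change map $F^*\bigl((\rho_*L_{i+1})^{\mathfrak{S}_n}\bigr) \to (\rho_*F^*L_{i+1})^{\mathfrak{S}_n}$ is an isomorphism (by pulling it back along $\rho$ and invoking Fogarty again), defining $\beta_i$ as the composite. You instead perform Galois \'etale descent of $\alpha_i$ over $S^{(n)}_{\circ}$, where $\rho$ is finite \'etale with group $\mathfrak{S}_n$, and then extend the resulting $\beta_i^{\circ}$ across the codimension-$2$ complement by a Hartogs argument on the normal variety $S^{(n)}$, using that $\beta_i^{\circ}$ is a section of the line bundle $\sL_i \otimes (F^*\sL_{i+1})^{-1}$. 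Both routes are valid; yours localizes the descent to the locus where it is formal and pushes all the difficulty into a reflexivity/codimension statement, whereas the paper's route avoids any extension step at the price of having to verify the Frobenius base-change isomorphism by hand. Note also that the identification $F^*\rho^* \cong \rho^*F^*$ you use holds for \emph{any} morphism of $\mathbb{F}_p$-schemes (it is the natural transformation $\gamma$ from Section~\ref{strat}), so \'etaleness is not needed for that particular step.

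One justification in your last step is wrong as stated: $\rho : S^n \to S^{(n)}$ is \emph{not} flat for a surface $S$ (the scheme-theoretic fibers over the diagonal locus have length exceeding $n!$, and $S^{(n)}$ is singular there), so you cannot invoke faithful flatness of $\rho$ to conclude that $\beta_i$ is an isomorphism. The conclusion itself is fine and has two easy correct proofs: either observe that the degeneracy locus of the nonzero map of line bundles $\beta_i$ is a divisor on the integral scheme $S^{(n)}$ contained in the codimension-$\geq 2$ set $S^{(n)} \setminus S^{(n)}_{\circ}$, hence empty; or use mere surjectivity of $\rho$ together with Nakayama, since a map of line bundles is an isomorphism at a point $y$ as soon as its pullback is nonzero at some point over $y$. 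Please replace the flatness appeal with one of these.
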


\begin{proof}
By Fogarty's result mentioned above, for any $\mathfrak{S}_n$-equivariant line bundle $L_i$ there exists line bundle $\sL_i \simeq \rho_*L_i^{\mathfrak{S}_n}$ such that $\rho^*\sL_i \simeq L_i$. Pushing forward $\alpha_i$ and taking $\mathfrak{S}_n$ invariants we get the isomorphism

$$\rho_*(F^*L_{i+1})^{\mathfrak{S}_n} \xrightarrow{\rho_*(\alpha_i)^{\mathfrak{S}_n}} \rho_*(L_i)^{\mathfrak{S}_n} $$

We show that the natural homomorphism 

$$F^*(\rho_*(L_{i})^{\mathfrak{S}_n}) \to (F^*\rho_*(L_{i})^{\mathfrak{S}_n}) \to (\rho_*F^*(L_{i})^{\mathfrak{S}_n}) $$  is an isomorphism. Pulling back the morphism under $\rho$, we get the commutative diagram

\begin{center}
\begin{tikzcd}
	{\rho^*F^*((\rho_*L_i)^{\mathfrak{S}_n})} &&& {\rho^*((\rho_*F^*L_i)^{\mathfrak{S}_n})} \\
	\\
	{F^*L_i} &&& {F^*L_i}
	\arrow[from=1-1, to=1-4]
	\arrow[from=1-1, to=3-1]
	\arrow["{=}", from=3-1, to=3-4]
	\arrow[from=1-4, to=3-4]
\end{tikzcd}
\end{center}

where the vertical morphisms are the natural morphism which are isomorphisms by Fogarty's theorem. By pushing forward under $\rho$ and taking $\mathfrak{S}_n$ invariants we get that 

$$F^*(\rho_*(L_{i})^{\mathfrak{S}_n}) \to (\rho_*F^*(L_{i})^{\mathfrak{S}_n}) $$ 

is an isomorphism. We define $\beta_i$ to be the composite isomorphism 

$$F^*(\rho_*(L_{i})^{\mathfrak{S}_n}) \to (\rho_*F^*(L_{i})^{\mathfrak{S}_n}) \xrightarrow{\rho_*(\alpha_i)^{\mathfrak{S}_n}} \rho_*(L_i)^{\mathfrak{S}_n} $$

The commutative diagram also gives us that $\rho^*(\sL_i, \beta_i) \simeq (L_i, \alpha_i)$
\end{proof}

\subsection{Faithfully flat}

Next we show that the morphism $f$ is faithfully flat

\begin{proposition}
The homomorphism $$f:  \pi^{\textrm{alg}}(S, x)_{\textrm{ab}} \to  \pi^{\textrm{alg}}(S^{[n]}, \tilde{nx})$$ is faithfully flat.
\end{proposition}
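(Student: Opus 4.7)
The plan is to invoke the DM82 criterion recalled above: $f$ is faithfully flat if and only if the associated tensor functor $\textrm{Rep}(f):\mathcal{S}(S^{[n]}) \to \textrm{Rep}(\pi^{\textrm{alg}}(S,x)_{\textrm{ab}})$ is fully faithful with essential image closed under subobjects. Full faithfulness is essentially free: the functor $T:\mathcal{S}(S^{[n]}) \to \mathcal{S}(S^n)$ constructed in Section \ref{functor} was already shown to be fully faithful, and $\textrm{Rep}(f)$ is just $T$ post-composed with the fully faithful embedding $\textrm{Rep}(\pi^{\textrm{alg}}(S,x)_{\textrm{ab}}) \hookrightarrow \mathcal{S}(S^n) = \textrm{Rep}(\pi^{\textrm{alg}}(S,x)^n)$ provided by Lemma \ref{grpschlemm}. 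All of the work lies in the closure-under-subobjects condition.

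Given $(\sE_i, \alpha_i) \in \mathcal{S}(S^{[n]})$ and a sub-$\pi^{\textrm{alg}}(S,x)_{\textrm{ab}}$-representation $W \subset T((\sE_i, \alpha_i))$, the goal is to produce a stratified sub-bundle $(\sF_i, \gamma_i) \hookrightarrow (\sE_i, \alpha_i)$ on $S^{[n]}$ with $T((\sF_i, \gamma_i)) \simeq W$. I would induct on $\textrm{rank}(W)$. The rank-$1$ case is the heart of the argument and is where the proposition on $\mathfrak{S}_n$-equivariant stratified line bundles just proved enters decisively: a $1$-dimensional sub-representation $L \subset T((\sE_i, \alpha_i))$ is precisely a $\mathfrak{S}_n$-equivariant stratified line sub-bundle on $S^n$, so by that proposition $L \simeq \rho^*(\mathcal{L}', \beta')$ for some stratified line bundle $(\mathcal{L}', \beta')$ on $S^{(n)}$. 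Pushing the inclusion $L \hookrightarrow \rho^* h_*(\sE_i|_{S^{[n]}_*})$ along $\rho$ and taking $\mathfrak{S}_n$-invariants, then using Fogarty's result together with the projection formula to identify $\rho_*(\rho^* h_*\sE_i)^{\mathfrak{S}_n}$ with $h_*\sE_i$, yields an inclusion $\mathcal{L}'|_{S^{(n)}_*} \hookrightarrow h_*(\sE_i|_{S^{[n]}_*})$. Pulling back by $h$ and using $h^* h_* \simeq \textrm{id}$ from Proposition 1 gives $h^*\mathcal{L}'|_{S^{[n]}_*} \hookrightarrow \sE_i|_{S^{[n]}_*}$, which extends uniquely across $S^{[n]} \setminus S^{[n]}_*$ via the equivalence $i^*$.

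For the inductive step, since $\pi^{\textrm{alg}}(S,x)_{\textrm{ab}}$ is an affine abelian group scheme over the algebraically closed field $k$, every simple object of its representation category is $1$-dimensional; in particular $W$ contains a $1$-dimensional sub-representation $L$. Lifting $L$ to $L' \hookrightarrow (\sE_i, \alpha_i)$ by the base case and exploiting exactness of $T$, the quotient $T((\sE_i, \alpha_i))/L$ is identified with $T((\sE_i, \alpha_i)/L')$, the inductive hypothesis applied to the sub-representation $W/L$ of strictly smaller rank supplies a lift inside $(\sE_i, \alpha_i)/L'$, and its preimage in $(\sE_i, \alpha_i)$ is the desired sub-bundle. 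The main technical obstacle I foresee is exactness of $T = j_* \circ \rho^* \circ h_* \circ i^*$---specifically right exactness of $h_*$ on stratified bundles over $S^{[n]}_*$, which could a priori fail along the codimension-$2$ locus $C(2,1,\dots,1)$; the formal-functions computation from the proof of Proposition 1 nonetheless adapts, since every stratified bundle restricts trivially to the $\mathbb{P}^1_k$-fibers of $h$ by Gieseker's theorem.
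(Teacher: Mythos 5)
Your proposal is correct and rests on the same two pillars as the paper's proof --- the Deligne--Milne criterion with full faithfulness of $T$ already in hand, the fact that all irreducible representations of the abelian group scheme $\pi^{\textrm{alg}}(S,x)_{\textrm{ab}}$ are one-dimensional, and the descent result for $\mathfrak{S}_n$-equivariant stratified line bundles --- but your devissage runs in the dual direction. The paper inducts on $\textrm{rank}(\sE_{\bullet})$ and extracts a one-dimensional \emph{quotient} of $V/V'$, producing a surjection $\sF_{\bullet} \to \sL_{\bullet}$ whose descent to a surjection $\sE_{\bullet} \to \tilde{L}_{\bullet}$ on $S^{[n]}$ is verified by a cokernel computation on $S^{(n)}_*$ followed by extension across the codimension-$2$ locus; the kernel $\kappa_{\bullet}$ then carries the induction. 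You instead induct on $\textrm{rank}(W)$ and extract a one-dimensional \emph{sub}representation, lifting it via $\rho_*(-)^{\mathfrak{S}_n}$, Fogarty, and $h^*h_* \simeq \textrm{id}$; here injectivity is preserved for free by left-exactness, whereas the paper has to work for surjectivity. The price is paid elsewhere, and you correctly flag the larger installment: (i) right-exactness of $T$, which does reduce to $R^1h_* = 0$ for bundles trivial on the $\mathbb{P}^1$-fibers via the formal-functions induction of Proposition 1 (one should also note that $\rho^*$ preserves exactness of the resulting locally split sequences even though $\rho$ is not flat over the singular locus); and (ii) the fact that your extended inclusion $L'_{\bullet} \hookrightarrow \sE_{\bullet}$ has locally free cokernel --- a priori the fiberwise map could degenerate along the exceptional divisor or the codimension-$2$ locus --- which you should justify by appealing to the fact that $\mathcal{S}(S^{[n]})$ is abelian with every object locally free, so sheaf-theoretic cokernels of morphisms of stratified bundles are automatically again stratified bundles. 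With those two points made explicit, your argument is a legitimate alternative proof.
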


\begin{proof}
By [\cite{DM82} Theorem 2.21], this is equivalent to showing that the functor $$T: \mathcal{S}(S^{[n]}) \to \mathcal{S}(S^n)$$ is fully faithful and the essential image of $T$ is closed under taking subobjects. We already know that $T$ is fully faithful. Let $\sE_{\bullet} = (\sE_i, \alpha_i)$ be a stratified bundle on $S^{[n]}$ and  $\sF_{\bullet} := T(\sE_{\bullet})$ be the corresponding $\mathfrak{S}_n$-equivariant stratified bundle on $S^n$. If $\sF'_{\bullet} \subset \sF_{\bullet}$ is a $\mathfrak{S}_n$-equivariant stratified subbundle, then we need to show there exists $\sE'_{\bullet} \subset \sE_{\bullet}$ such that $T(\sE'_{\bullet}) = \sF'_{\bullet}$.
\\

The proof proceeds by induction on the rank of $\sE_{\bullet}$. If rank $\sE_{\bullet} = 1$, the proof is immediate. Let rank $\sE_{\bullet} \geq 2$

Then the stratified bundles $\sF_{\bullet}$ and $\sF'_{\bullet}$ correspond to the representations

$$\pi^{\textrm{alg}}(S^n, (x, \dots, x)\to \pi^{\textrm{alg}}(S, x)_{\textrm{ab}} \to GL(V)$$

and

$$\pi^{\textrm{alg}}(S^n, (x, \dots, x)\to \pi^{\textrm{alg}}(S, x)_{\textrm{ab}} \to GL(V')$$ respectively.
\\

As  $ \pi^{\textrm{alg}}(S, x)_{\textrm{ab}}$ is an abelian affine group scheme over $k$, all its irreducible representations are one dimensional. Thus one gets that the $ \pi^{\textrm{alg}}(S, x)_{\textrm{ab}}$-module $V/V'$ has a one dimensional quotient $W$. Thus there is a $ \pi^{\textrm{alg}}(S, x)_{\textrm{ab}}$-module surjection
$V \to W$ such that the kernel contains $V'$. Let $\sL_{\bullet}$ be the $\mathfrak{S}_n$-equivariant stratified bundle corresponding to $W$. Thus we have a short exact sequence of $\mathfrak{S}_n$-equivariant stratified bundles

$$0 \to \sK_{\bullet} \to \sF_{\bullet} \to \sL_{\bullet}\to 0$$

where $\sF'_{\bullet} \subset \sK_{\bullet}$.
\\

By proposition 1 above, we know that $L_i := \rho_*\sL_i^{\mathfrak{S}_n}$ is a line bundle on $S^{(n)}$ and $\rho^*L_i = \sL_i$

We claim that the following complex of sheaves on $S^{(n)}_*$ is exact for all $i \in \mathbb{N}$
\begin{equation}
\label{ses}
0 \to (\rho_*\sK_i)^{\mathfrak{S}_n}|_{S^{(n)}_*} \to (\rho_*\sF_i)^{\mathfrak{S}_n}|_{S^{(n)}_*} \to (\rho_*\sL_i)^{\mathfrak{S}_n}|_{S^{(n)}_*} \to 0
\end{equation}

It is enough to show that $(\rho_*\sF_i)^{\mathfrak{S}_n}|_{S^{(n)}_*} \to (\rho_*\sL_i)^{\mathfrak{S}_n}|_{S^{(n)}_*}$ is surjective. We note that $(\rho_*\sF_i)^{\mathfrak{S}_n}|_{S^{(n)}_*} = h_*(\sE_i|_{S^{[n]}_*})$. Let $C$ be the cokernel 

$$h_*(\sE_i|_{S^{[n]}_*}) \to (\rho_*\sL_i)^{\mathfrak{S}_n}|_{S^{(n)}_*} \to C \to 0$$

Pulling back under $\rho$, we get the following commutative diagram on $S^n_*$

\begin{center}
\begin{tikzcd}
	{\rho^*h_*(\mathcal{E}_i|_{S^{[n]}_*})} && {\rho^*((\rho_*\mathcal{L}_i)^{\mathfrak{S}_n}|_{S^{(n)}_*} )} && {\rho^*C} && 0 \\
	\\
	{\mathcal{F}_i} && {\mathcal{L}_i|_{S^n_*}} && 0
	\arrow[from=1-1, to=1-3]
	\arrow[from=1-3, to=1-5]
	\arrow[from=1-5, to=1-7]
	\arrow["{=}", from=1-3, to=3-3]
	\arrow["{=}", from=1-1, to=3-1]
	\arrow[from=3-1, to=3-3]
	\arrow[from=3-3, to=3-5]
\end{tikzcd}
\end{center}

The rows are exact and hence $\rho^*C = 0$. As $\rho$ is surjective, this implies $C = 0$. Thus $K_i :=  (\rho_*\sK_i)^{\mathfrak{S}_n}|_{S^{(n)}_*}$ is locally free on $S^{(n)}_*$
\\

Pulling back the exact sequence \eqref{ses} under $h$, we get a short exact sequence of locally free sheaves on $S^{[n]}_*$

$$0 \to h^*K_i|_{S^{[n]}_*} \to \sE_i|_{S^{[n]}_*} \to \tilde{L}_i|_{S^{[n]}_*} \to 0 $$

where $\tilde{L}_i := h^*L_i$.
\\

As the complement of $S^{[n]}_*$ in $S^{[n]}$ is of codimension $\geq 2$ and $\sE_i, L$ are locally free, the surjective morphism

$$\sE_i|_{S^{[n]}_*} \to \tilde{L}_i|_{S^{[n]}_*} $$

extends to a unique morphism $\tau_i: \sE_i \to \tilde{L}_i$. This is surjective as $L$ is of rank $1$ and $\tau := (\tau_i)$ give a nonzero morphism of stratified bundles $$\sE_{\bullet} \to \tilde{L}_{\bullet}$$

where $\tilde{L}_{\bullet} := h^*(\rho_*(\sL_{\bullet})^{\mathfrak{S}_n})$. Let $\kappa_{\bullet}$ be the kernel of the morphism $\sE_{\bullet} \to \tilde{L}_{\bullet}$. Then $T(\kappa_{\bullet}) = \sK_{\bullet}$. Thus, by the induction hypothesis on rank, there exists a stratified subbundle $\sE'_{\bullet} \subset \kappa_{\bullet} \subset \sE_{\bullet}$ such that $T(\sE'_{\bullet}) = \sF'_{\bullet}$.

\end{proof}

\subsection{Closed immersion}

We begin by recalling a result from \cite{PS20}.
\\

Let $p \in S^{(n)}$ be a point of type $(n_1, n_2, \dots, n_r)$. Let $p'_i$, for $i = 1, 2, \dots m$ be the points in the fiber $h^{-1}(p)$. Let $A$ be the local ring $\sO_{S^{(n)}, p}$ and $B$ be the semilocal ring $\sO_{S^n}\otimes_{\sO_{S^{(n)}}} A$. Then $B$ is a finite $A$ module and $B^{\mathfrak{S}_n} = A$.

\begin{lemma}\label{surj}
When char $k > n_1$, any $\mathfrak{S}_n$-equivariant surjective $B$-module homomorphism $f: M \to N$ of finitely generated $B$ modules descends to surjective $A$-module homomorphism of the $\mathfrak{S}_n$-invariants $M^{\mathfrak{S}_n} \to N^{\mathfrak{S}_n}$
\end{lemma}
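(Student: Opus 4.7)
The plan is to reduce the statement to a classical fact about invariants of finite groups whose order is invertible in the ground field. The semilocal ring $B$ decomposes as a finite product $B \cong \prod_{i=1}^{m} B_i$ where $B_i = \sO_{S^n, p'_i}$, and the $\mathfrak{S}_n$-action on $B$ permutes the factors transitively (since $\mathfrak{S}_n$ acts transitively on the fiber $\rho^{-1}(p)$). Consequently every finitely generated $B$-module $M$ splits canonically as $M \cong \prod_{i=1}^m M_i$ with $M_i$ a $B_i$-module, and the $\mathfrak{S}_n$-equivariant surjection $f \colon M \to N$ respects this decomposition, permuting the components compatibly with the permutation of the $B_i$'s.

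Next I would identify the stabilizer. Fix the point $p'_1$ and let $H \subset \mathfrak{S}_n$ be its stabilizer (equivalently, the stabilizer of the factor $B_1$). Since $p$ has type $(n_1,\dots,n_r)$, the point $p'_1$ corresponds to an $n$-tuple in $S^n$ in which some point of $S$ appears with multiplicity $n_j$ for each $j$; thus $H$ is a Young subgroup, concretely
\[
H \;\cong\; \mathfrak{S}_{n_1} \times \mathfrak{S}_{n_2} \times \cdots \times \mathfrak{S}_{n_r},
\qquad |H| \;=\; n_1!\, n_2!\cdots n_r!.
\]
The key observation is then that evaluation at the first component induces an isomorphism of $A$-modules $M^{\mathfrak{S}_n} \xrightarrow{\sim} M_1^{H}$: an $\mathfrak{S}_n$-invariant element $(m_1,\dots,m_m) \in M$ is determined by $m_1$, which must be $H$-invariant, and conversely any $H$-invariant $m_1 \in M_1^H$ extends uniquely to an $\mathfrak{S}_n$-invariant element via the coset representatives of $H$ in $\mathfrak{S}_n$. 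The same holds for $N$, and the map $f^{\mathfrak{S}_n}$ is identified with $f_1^H \colon M_1^H \to N_1^H$.

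Finally I would exploit the characteristic hypothesis. Since $n_1 \geq n_j$ for every $j$ and $\mathrm{char}\, k > n_1$, all the integers $n_j!$ are invertible in $k$, so $|H| = n_1!\cdots n_r!$ is invertible in $A$. One therefore has the Reynolds operator
\[
e_H \;=\; \frac{1}{|H|} \sum_{h \in H} h \;\in\; A[H],
\]
which is an idempotent splitting of the inclusion $(\,\cdot\,)^H \hookrightarrow (\,\cdot\,)$. Consequently $(\,\cdot\,)^H$ is an exact functor on $A[H]$-modules; in particular the surjection $f_1 \colon M_1 \to N_1$ descends to a surjection $f_1^H \colon M_1^H \to N_1^H$, and via the identification above this is precisely the claim that $f^{\mathfrak{S}_n} \colon M^{\mathfrak{S}_n} \to N^{\mathfrak{S}_n}$ is surjective. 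The only delicate point in this plan is the careful verification that the decomposition $M \cong \prod M_i$ is $\mathfrak{S}_n$-equivariant in the sense needed to produce the isomorphism $M^{\mathfrak{S}_n} \cong M_1^H$; the rest is formal once the characteristic hypothesis delivers the Reynolds operator.
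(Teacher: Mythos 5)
The paper itself gives no proof of this lemma; it is quoted from \cite{PS20}, so there is nothing internal to compare against, and your argument has to stand on its own. Your overall strategy is the right one: reduce to the stabilizer $H\cong\mathfrak{S}_{n_1}\times\cdots\times\mathfrak{S}_{n_r}$ of a single point of the fibre, note that $|H|=n_1!\cdots n_r!$ is prime to $p$ because $p>n_1\geq n_j$ for every $j$, and use the Reynolds operator $e_H=|H|^{-1}\sum_{h\in H}h$ to make $(-)^H$ exact.

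However, the first step is false as stated: the semilocal ring $B=\sO_{S^n}\otimes_{\sO_{S^{(n)}}}A$ does \emph{not} decompose as $\prod_{i=1}^m\sO_{S^n,p'_i}$ when $m\geq 2$. Indeed $B$ is a localization of the coordinate ring of an affine open of the irreducible variety $S^n$, hence a domain contained in $k(S^n)$; a domain has no nontrivial idempotents, so a semilocal domain with several maximal ideals cannot split as a product (compare $\mathbb{Z}_{(5)}[i]$, which has two maximal ideals but is a domain). Consequently $M$ does not split either, and the identification $M^{\mathfrak{S}_n}\cong M_1^H$ is not available at this level. The repair is to pass to the $\mathfrak{m}_A$-adic completion: $\widehat{B}=B\otimes_A\widehat{A}$ is a \emph{complete} semilocal ring and therefore genuinely decomposes as $\prod_i\widehat{\sO}_{S^n,p'_i}$, the $\mathfrak{S}_n$-action permutes the factors transitively with stabilizer the Young subgroup $H$, and your argument then shows that $\widehat{M}^{\mathfrak{S}_n}\to\widehat{N}^{\mathfrak{S}_n}$ is surjective. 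To descend back to $A$ you need two further (routine) facts that the write-up omits: completion commutes with taking $\mathfrak{S}_n$-invariants of finitely generated modules, since the invariants are the kernel of $m\mapsto(\sigma m-m)_{\sigma}$ and $-\otimes_A\widehat{A}$ is exact on finite $A$-modules; and $\widehat{A}$ is faithfully flat over the noetherian local ring $A$, so surjectivity of the completed map implies surjectivity of $M^{\mathfrak{S}_n}\to N^{\mathfrak{S}_n}$. This is in fact consistent with how the paper uses the lemma (``surjectivity can be checked after passing to a formal neighbourhood of $p$''). With the completion step inserted, your proof goes through.
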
 

This allows us to prove the following analogue of Proposition 5.3.6 in \cite{PS20}.

\begin{proposition*}
Let $\sE_{\bullet} = (\sE_i, \alpha_i)$ be a $\mathfrak{S}_n$-equivariant stratified bundle on $S^n$

\begin{enumerate}
\item Let $p \in S^{(n)}$ be a point of type $(n_1, n_2, \dots, n_r)$. If char $k > n_1$, then the sheaf $\rho_*\sE_i^{\mathfrak{S}_n}$ is locally free in a neighbourhood of $p$ for all $i$.
\item Let $U$ denote the largest open subset where $\rho_*\sE_i^{\mathfrak{S}_n}$ is locally free, then on $\rho^{-1}(U)$, the natural morphism

$$\rho^*\rho_*\sE_i^{\mathfrak{S}_n} \to \sE_i$$

is an isomorphism for all $i \in \mathbb{N}$
\end{enumerate}
\end{proposition*}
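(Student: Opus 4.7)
The plan is to work locally at $p \in S^{(n)}$ and complete. Pick a preimage $p'_1 \in \rho^{-1}(p)$ with stabilizer $H \subseteq \mathfrak{S}_n$ conjugate to $\mathfrak{S}_{n_1} \times \cdots \times \mathfrak{S}_{n_r}$. Completing the local rings yields $\hat{A} = \widehat{\sO}_{S^{(n)}, p}$ and $\hat{B} = \hat{A} \otimes_{\sO_{S^{(n)}}} \sO_{S^n} \simeq \prod_{\sigma \in \mathfrak{S}_n/H} \hat{B}_{\sigma p'_1}$, with $\mathfrak{S}_n$ permuting factors transitively. For any $\mathfrak{S}_n$-equivariant $\hat{B}$-module $\hat{M}$, projection to the factor at $p'_1$ gives a canonical isomorphism $\hat{M}^{\mathfrak{S}_n} \simeq (\hat{M}_{p'_1})^H$ of $\hat{A}$-modules. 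The hypothesis $\operatorname{char}(k) > n_1$ forces $|H| = n_1! \cdots n_r!$ to be invertible in $k$, so Maschke's theorem is available and Lemma \ref{surj} applies: the $H$-invariants functor is exact on semilinear $H$-modules.

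For part (1), set $\hat{M}_i := (\sE_i)_{p'_1} \otimes_A \hat{A}$: a free $\hat{B}_{p'_1}$-module of rank $r = \operatorname{rank}(\sE_i)$ equipped with an $H$-action and Frobenius isomorphisms $F^* \hat{M}_{i+1} \simeq \hat{M}_i$. The stratified structure is crucial here: iterating the $\alpha_i$ presents $\hat{M}_i$ as a $k$-fold Frobenius pullback of $\hat{M}_{i+k}$ for every $k \geq 0$, endowing $\hat{M}_i$ with the canonical Cartier flat connection. Since this connection is $H$-equivariant by naturality and $\operatorname{Spec} \hat{B}_{p'_1}$ is simply-connected for stratified bundles, horizontal sections give an $H$-equivariant trivialization $\hat{M}_i \simeq V_i \otimes_k \hat{B}_{p'_1}$, where $V_i := \sE_i|_{p'_1}$ is the fibre $H$-representation. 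The invariant module then identifies with the covariant module $(V_i \otimes_k \hat{B}_{p'_1})^H$, and the combination of Frobenius-twist periodicity $V_i \simeq V_{i+1}^{(p)}$ and the invariant theory of $H = \prod \mathfrak{S}_{n_j}$ (a permutation-type action on tangent coordinates, after Luna linearization) shows this covariant module is free of rank $r$ over $\hat{A}$ for all $i$.

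For part (2), on $U \subseteq S^{(n)}$ where $\rho_*\sE_i^{\mathfrak{S}_n}$ is locally free of rank $r$, both $\rho^*\rho_*\sE_i^{\mathfrak{S}_n}$ and $\sE_i$ are locally free of rank $r$ on $\rho^{-1}(U)$, and they agree over the étale locus $\rho^{-1}(U) \cap S^n_\circ$. Using the local trivialization from (1), the natural map at any $p'_1 \in \rho^{-1}(U)$ is identified with the tautological surjection $(V_i \otimes_k \hat{B}_{p'_1})^H \otimes_{\hat{A}} \hat{B}_{p'_1} \to V_i \otimes_k \hat{B}_{p'_1}$, which is surjective and hence an isomorphism between locally free sheaves of the same rank. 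The main obstacle is the $H$-equivariant Cartier trivialization together with controlling $V_i$ through its Frobenius orbit: the freeness of the covariant module is automatic once $V_i$ is of permutation type, but one must use the stratified structure—not merely the equivariant structure—to arrange this.
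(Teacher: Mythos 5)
Your strategy is genuinely different from the paper's, and it founders at its central step. You complete at $p$, reduce to $H$-invariants at a single preimage $p'_1$, and use the formal-local triviality of stratified bundles to write $\hat M_i \simeq V_i\otimes_k \hat B_{p'_1}$ $H$-equivariantly; this correctly isolates the isotropy representation $V_i$ of $H$ on the fibre as the crux of the matter. But the claim that the covariant module $(V_i\otimes_k\hat B_{p'_1})^{H}$ is then free of rank $r$ over $\hat A$ is false for a general $V_i$, and nothing you invoke rules out the bad cases. The point is that $H=\prod\mathfrak{S}_{n_j}$ acts on the tangent space of $S^n$ at $p'_1$ by permuting $2$-dimensional blocks, so a transposition fixes a codimension-$2$ subspace and is \emph{not} a pseudo-reflection; by Chevalley--Shephard--Todd, $\hat B_{p'_1}$ is not free over $\hat A$, and modules of covariants need not be free. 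Concretely, for $n=2$ and $V$ the sign representation of $\mathfrak{S}_2$, $(V\otimes\hat B)^{H}$ is the module of anti-invariants, which has rank $1$ but needs two generators, hence is not free; for $V$ the regular representation $k[H]$ one gets $\hat B$ itself, also not free. So freeness is \emph{not} ``automatic once $V_i$ is of permutation type''; essentially only the trivial isotropy representation works here. Moreover your proposed mechanism for constraining $V_i$ --- the Frobenius-twist periodicity $V_i\simeq V_{i+1}^{(p)}$ --- is vacuous against the sign character, which is defined over $\mathbb{F}_p$ and hence Frobenius-stable: the trivial stratified line bundle with sign-twisted equivariant structure satisfies all your constraints and defeats the argument. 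The same issue sinks your part (2): the ``tautological'' map $(V_i\otimes\hat B)^{H}\otimes_{\hat A}\hat B\to V_i\otimes\hat B$ is not surjective unless $H$ acts trivially on $V_i$, since its image in the closed fibre is $V_i^{H}$.

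What is missing, and what the paper supplies, is a \emph{global} input that controls the isotropy representation. The paper does not argue formally-locally at all: it uses the fact that $\sE_\bullet$ corresponds to a representation of the abelian group scheme $\pi^{\textrm{alg}}(S,x)_{\textrm{ab}}$ to produce a $\mathfrak{S}_n$-equivariant filtration with stratified line bundle quotients, handles the rank-$1$ case by Fogarty's theorem on line bundles on symmetric powers of surfaces, and splices the pieces together using Lemma \ref{surj} (exactness of invariants when $\textrm{char}\,k>n_1$), inducting on the rank. Your completion-and-invariants framework could only be repaired by importing exactly this kind of global information to show the relevant $V_i$ are trivial; as written, the local stratified structure alone does not provide it.
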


\begin{proof}
The first assertion is proved by induction on the rank of $\sE_{\bullet}$. If $\sE_{\bullet}$ is a $\mathfrak{S}_n$-equivariant stratified bundle of rank $1$, then by proposition 1, $\rho_*\sE_i^{\mathfrak{S}_n}$ is locally free on $S^{(n)}$ for all $i$. In general, as $\sE_{\bullet}$ corresponds to a representation of the abelian group scheme $\pi^{\textrm{alg}}(S, x)_{\textrm{ab}}$, there exists a $\mathfrak{S}_n$-equivariant short exact sequence of locally free sheaves on $S^n$

$$0 \to \sK_{\bullet} \to \sE_{\bullet} \to \sL_{\bullet}\to 0$$

Pushing forward by $\rho$ and taking $\mathfrak{S}_n$-invariants we get the exact sequence for all $i$

$$0 \to \rho(\sK_i)^{\mathfrak{S}_n} \to \rho(\sE_i)^{\mathfrak{S}_n} \to \rho(\sL_i)^{\mathfrak{S}_n} $$

We claim that the homomorphism on the right is surjective in the neighbourhood of a point $p$ of type $(n_1, n_2, \dots, n_r)$. Surjectivity can be checked after passing to a formal neighbourhood of $p$ and thus reduces to lemma \ref{surj}. By induction hypothesis on rank, both $\rho(\sK_i)^{\mathfrak{S}_n}$ and $\rho(\sL_i)^{\mathfrak{S}_n}$ are locally free on a neighbourhood of $p$ and hence so is $\rho(\sE_i)^{\mathfrak{S}_n}$.
\\

The second assertion follows from the observation that the natural homomorphism 

$$\rho^*\rho_*\sE_i^{\mathfrak{S}_n} \to \sE_i$$

is an isomorphism on $\rho^{-1}(S^{(n)}_{\circ})$ as as $\rho: S^n_{\circ} \to S^{(n)}_{\circ}$ is finite \'etale. As the complement of $S^n_{\circ}$ in $\rho^{-1}(U)$ is of codimension $\geq 2$ and both sheaves are locally free on $\rho^{-1}(U)$, thus the natural morphism is an isomorphism.
\end{proof}

\begin{proposition}
Let char $k > 3$. The homomorphism $$f:  \pi^{\textrm{alg}}(S, x)_{\textrm{ab}} \to  \pi^{\textrm{alg}}(S^{[n]}, \tilde{nx})$$ is faithfully flat.
\end{proposition}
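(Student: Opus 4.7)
The plan is to verify the faithful-flatness half of the Deligne--Milne criterion (Theorem 2.21 of \cite{DM82}): it suffices to show that the Tannakian functor $T : \mathcal{S}(S^{[n]}) \to \mathcal{S}(S^n)$ (with values in the subcategory of $\mathfrak{S}_n$-equivariant stratified bundles) is fully faithful \emph{and} that its essential image is closed under subobjects. Full faithfulness of $T$ was already recorded in Section \ref{functor} by writing $T = j_* \circ \rho^* \circ h_* \circ i^*$ and observing that each factor is fully faithful (with $i^*$ and $j_*$ being equivalences). So the entire content of the proposition is the closure property, which is where the strengthened hypothesis $\mathrm{char}\, k > 3$ intervenes through the Proposition just established.

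Given $\sE_\bullet = (\sE_i, \alpha_i) \in \mathcal{S}(S^{[n]})$ and a $\mathfrak{S}_n$-equivariant stratified subbundle $\sF'_\bullet \subset T(\sE_\bullet)$, I would produce $\sE'_\bullet \subset \sE_\bullet$ with $T(\sE'_\bullet) \cong \sF'_\bullet$ in three stages. First, every point of $S^{(n)}_*$ has type $(1,\dots,1)$ or $(2,1,\dots,1)$ with $n_1 \leq 2 < \mathrm{char}\, k$, so the previous Proposition applies on $S^{(n)}_*$ to show that $(\rho_* \sF'_i)^{\mathfrak{S}_n}$ is locally free there and that $\rho^*(\rho_* \sF'_i)^{\mathfrak{S}_n} \to \sF'_i$ is an isomorphism over $S^n_*$. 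Second, the stratification maps $\alpha_i|_{\sF'_\bullet}$ descend to isomorphisms $F^*(\rho_* \sF'_{i+1})^{\mathfrak{S}_n} \xrightarrow{\sim} (\rho_* \sF'_i)^{\mathfrak{S}_n}$ on $S^{(n)}_*$, by chasing the natural transformation $F^*\rho_* \to \rho_*F^*$ through the $\mathfrak{S}_n$-invariants functor exactly as in the first Proposition of Section \ref{iso}; combined with the identification $(\rho_* T(\sE_\bullet)_i)^{\mathfrak{S}_n}|_{S^{(n)}_*} \cong h_*(\sE_i|_{S^{[n]}_*})$, this exhibits a stratified subbundle $\sG'_\bullet \subset h_*(\sE_\bullet|_{S^{[n]}_*})$ on $S^{(n)}_*$. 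Third, pulling back along $h$ and applying $h^*h_* \cong \mathrm{id}$ on $\mathbb{VB}^h_{S^{[n]}_*}$ (Proposition 3) produces a stratified subbundle of $\sE_\bullet|_{S^{[n]}_*}$, which extends uniquely to $\sE'_\bullet \subset \sE_\bullet$ on $S^{[n]}$ via the equivalence $i^*$ (the complement having codimension $2$).

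The main obstacle I anticipate is tracking the compatibility of the descended stratification with the ambient one at each stage. Concretely, the invariants $(\rho_* \sF'_i)^{\mathfrak{S}_n}$ need to be identified as \emph{subsheaves} of $h_*(\sE_i|_{S^{[n]}_*})$, not merely as stratified bundles in isolation; this requires pushing the inclusion $\sF'_\bullet \hookrightarrow T(\sE_\bullet) = j_* \rho^* h_*(\sE_\bullet|_{S^{[n]}_*})$ down through $(\rho_*-)^{\mathfrak{S}_n}$ and verifying compatibility with the structure maps $\alpha_i$ and with the natural transformations $\eta$ and $\gamma$ at every level $i$. Once these diagrams commute, the passage to $\mathcal{S}(S^{[n]}_*)$ via Proposition 3 and the codimension-$2$ extension to $\mathcal{S}(S^{[n]})$ are formal, and the equality $T(\sE'_\bullet) \cong \sF'_\bullet$ then yields closure under subobjects, completing the verification of faithful flatness of $f$.
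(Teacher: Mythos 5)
First, a caveat about the statement itself: as printed it duplicates the earlier faithful-flatness proposition, but its placement in the subsection on closed immersions and the paper's proof of it (which verifies via \cite{DM82}, Theorem 2.21 that $T$ is \emph{essentially surjective}, working over the open set of points of type $(1,\dots,1)$, $(2,1,\dots,1)$, $(3,1,\dots,1)$, $(2,2,1,\dots,1)$ --- which is where $\mathrm{char}\,k>3$ is genuinely used) show that it is a copy-paste slip for ``is a closed immersion.'' You proved the statement as literally worded, so the fair comparison is with the paper's actual faithful-flatness proof at the start of the section on the isomorphism.

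Measured against that proof, your argument is correct but takes a genuinely different route. The paper argues by induction on the rank of $\sE_{\bullet}$: since $\pi^{\textrm{alg}}(S,x)_{\textrm{ab}}$ is abelian, $V/V'$ has a one-dimensional quotient $W$, realized as an equivariant stratified line-bundle quotient $\sL_{\bullet}$ of $\sF_{\bullet}$; this is descended by Fogarty's theorem, the surjection $\sE_{\bullet} \to h^*(\rho_*\sL_{\bullet})^{\mathfrak{S}_n}$ is extended over the codimension-two boundary, and the induction continues on the kernel $\kappa_{\bullet}$, which satisfies $T(\kappa_{\bullet}) = \sK_{\bullet} \supset \sF'_{\bullet}$. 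You instead descend the subobject $\sF'_{\bullet}$ wholesale, by applying the equivariant-descent proposition (the analogue of \cite{PS20}, Prop.\ 5.3.6) over $S^{(n)}_*$, where every type has $n_1 \leq 2$, and then transporting along $h^*$ and the codimension-two equivalence $i^*$. This is legitimate --- that proposition is proved independently of faithful flatness (it rests on Lemma \ref{grpschlemm}, Lemma \ref{surj}, and the rank-one case via Fogarty), so there is no circularity --- and it is arguably cleaner: it treats $\sF'_{\bullet}$ in one pass and makes visible that this half needs only $\mathrm{char}\,k > 2$, consistent with the paper's faithful-flatness proposition carrying no hypothesis beyond $\mathrm{char}\,k \neq 2$; your remark that $\mathrm{char}\,k>3$ ``intervenes'' here overstates what is used. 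Two caveats. One, the rank induction has not disappeared: it is relocated inside the descent proposition you cite, whose proof runs the very same induction (one-dimensional quotients from abelianness), so your route is a repackaging rather than a simplification of substance. Two, the compatibility checks you flag are real but routine and already modeled in the paper: descent of the $\alpha_i$ to the invariants is the ``imitating Proposition 1'' diagram chase through $F^*\rho_* \to \rho_*F^*$, and injectivity of $h^*$ applied to the descended inclusion holds because the kernel, being a stratified and hence locally free sheaf vanishing over $h^{-1}(S^{(n)}_{\circ})$, is zero. With those filled in, your outline completes to a correct proof of faithful flatness.
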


\begin{proof}
By [\cite{DM82}, Theorem 2.21], it is enough to show that the functor $$T: \mathcal{S}(S^{[n]}) \to \mathcal{S}(S^n)$$ is essentially surjective. Thus we want to show that for any $\mathfrak{S}_n$-equivariant stratified bundle $\sE_{\bullet}$ on $S^n$, there exists a stratified bundle $\sF_{\bullet}$ on $S^{[n]}$ such that $T(\sF_{\bullet}) = \sE_{\bullet}$.
\\

Let $U$ be the open subset of $S^{(n)}$ consisting of points of type $(1,1,\dots, 1), (2,1,\dots, 1), (3,1,\dots, 1)$ and $(2, 1,1,\dots, 1)$. By assumption on characteristic of $k$ and the previous proposition, we get that $\rho_*\sE_i^{\mathfrak{S}_n}$ is locally free on $U$. Also we have on $\rho^{-1}(U)$, the natural morphism

$$\rho^*\rho_*\sE_i^{\mathfrak{S}_n} \to \sE_i$$

is an isomorphism. Imitating proposition 1 above, this allows us to define a stratified bundle $(\rho_*\sE_i^{\mathfrak{S}_n}, \beta_i)$ on $U$ such that $\rho^*(\rho_*\sE_i^{\mathfrak{S}_n}, \beta_i) \simeq \sE_{\bullet}$. Pulling back under $h$ to $h^{-1}(U)$ (whose complement in $S^{[n]}$ has codimension $\geq 3$) and extending to $S^{[n]}$, we get a stratified bundle $\sF_{\bullet}$ such that $T(\sF_{\bullet}) = \sE_{\bullet}$ 
\end{proof}

As $f$ is both faithfully flat and a closed immersion, we get the following theorem 

\begin{theorem}
Let char $k > 3$. The homomorphism $$f:  \pi^{\textrm{alg}}(S, x)_{\textrm{ab}} \to  \pi^{\textrm{alg}}(S^{[n]}, \tilde{nx})$$ is an isomorphism.
\end{theorem}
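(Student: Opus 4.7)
The plan is to combine the two preceding propositions of Section \ref{iso} and invoke the purely formal fact that a morphism of affine group schemes which is simultaneously faithfully flat and a closed immersion must be an isomorphism. The first proposition of the section gives that $f$ is faithfully flat, while the proposition in the closed-immersion subsection supplies essential surjectivity of the functor $T$, which via the second bullet of Theorem 2.21 of \cite{DM82} translates to $f$ being a closed immersion.

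For the concluding formal step, I would argue on coordinate Hopf algebras. A closed immersion of affine schemes is dual to a surjection of rings, and a faithfully flat morphism of affine schemes is dual to an injection (since a faithfully flat module is in particular faithful). Hence the induced map on the Hopf algebras of the fundamental group schemes is both injective and surjective, and so an isomorphism. Alternatively, on the Tannakian side, the faithfully-flat half makes $\textrm{Rep}(f)$ fully faithful with essential image closed under subobjects, while the closed-immersion half (in its essentially surjective form) ensures that every object of $\textrm{Rep}(\pi^{\textrm{alg}}(S, x)_{\textrm{ab}})$ already lies in this essential image; thus $\textrm{Rep}(f)$ is an equivalence of tensor categories and $f$ is an isomorphism by Tannakian reconstruction.

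The main obstacle does not lie in this concluding assembly but in the two preceding propositions. Faithful flatness required an induction on rank, exploiting that every representation of the abelian group scheme $\pi^{\textrm{alg}}(S, x)_{\textrm{ab}}$ decomposes via one-dimensional quotients, together with the descent result for equivariant stratified line bundles along $\rho$. The closed-immersion half required the hypothesis char $k > 3$ so that Lemma \ref{surj} applies on the open set $U \subset S^{(n)}$ of points of type $(1,\dots,1)$, $(2,1,\dots,1)$, $(3,1,\dots,1)$ and $(2,2,1,\dots,1)$, whose preimage in $S^{[n]}$ has complement of codimension $\geq 3$, permitting the stratified bundle on $h^{-1}(U)$ to be uniquely extended to $S^{[n]}$.
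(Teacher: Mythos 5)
Your proposal is correct and follows essentially the same route as the paper, which likewise deduces the theorem formally from the two preceding propositions (faithfully flat plus closed immersion implies isomorphism); your Hopf-algebra and Tannakian justifications of that formal step are both valid. You also correctly read the second proposition of Section \ref{iso} as establishing that $f$ is a closed immersion via essential surjectivity of $T$ (its statement in the paper says ``faithfully flat'' only by an evident typo), and your type $(2,2,1,\dots,1)$ is the intended reading of the paper's $(2,1,1,\dots,1)$.
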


\end{document}